\documentclass[11pt]{article}
\usepackage[top=2.54cm, bottom=2.54cm, left=2.8cm, right=2.8cm]{geometry}
\usepackage[tbtags]{amsmath}
\usepackage{amssymb}
\usepackage{amsthm}
\usepackage{mathtools}
\usepackage{bm}
\usepackage{fancyhdr}
\usepackage{latexsym}
\usepackage{mathrsfs}
\usepackage{wasysym}
\usepackage{fancyhdr}
\usepackage{float}
\usepackage{graphicx}
\usepackage[numbers,sort&compress]{natbib}
\usepackage{setspace}
\allowdisplaybreaks[4]

\usepackage[dvipsnames]{xcolor}

\newtheorem{theorem}{\bf Theorem}[section]
\newtheorem{lemma}[theorem]{Lemma}
\newtheorem{proposition}[theorem]{Proposition}

\newtheorem{corollary}[theorem]{Corollary}
\newtheorem{conj}[theorem]{Conjecture}
\newtheorem{claim}{\indent Claim}[]
\newtheorem*{obser}{Observation}

\UseRawInputEncoding 

\begin{document}
	\begin{spacing}{1.1}
		\title{
			Maximum size of $C_{\leq k}$-free strong digraphs with out-degree at least two}
		\author{Bin Chen$^a$, \, Xinmin Hou$^{b,c}$\\
			\small$^a$Hefei National Laboratory\\
			\small University of Science and Technology of China, Hefei 230088, China\\
			\small$^b$ School of Mathematics Sciences\\
			\small University of Science and Technology of China, Hefei, Anhui 230026, China\\
			\small $^{c}$CAS Key Laboratory of Wu Wen-Tsun Mathematics\\
			\small University of Science and Technology of China, Hefei, Anhui 230026, China\\
			\small Email: $^a$cb03@ustc.edu.cn, \,\, $^b$xmhou@ustc.edu.cn }
		\date{}
		\maketitle
		{\bf Abstract}\,:
		Let $\mathscr{H}$ be a family of digraphs. A digraph $D$ is \emph{$\mathscr{H}$-free} if it contains no isomorphic copy of any member of $\mathscr{H}$. For $k\geq2$, we set $C_{\leq k}=\{C_{2}, C_{3},\ldots,C_{k}\}$, where $C_{\ell}$ is a directed cycle of length $\ell\in\{2,3,\ldots,k\}$. Let $D_{n}^{k}(\xi,\zeta)$ denote the family of \emph{${C}_{\le k}$-free} strong digraphs on $n$ vertices with every vertex having out-degree at least $\xi$ and in-degree at least $\zeta$, where both $\xi$ and $\zeta$ are positive integers. Let $\varphi_{n}^{k}(\xi,\zeta)=\max\{|A(D)|:\;D\in D_{n}^{k}(\xi,\zeta)\}$ and $\Phi_{n}^{k}(\xi,\zeta)=\{D\in D_{n}^{k}(\xi,\zeta): |A(D)|=\varphi_{n}^{k}(\xi,\zeta)\}$. 
		Bermond et al.\;(1980) verified that $\varphi_{n}^{k}(1,1)=\binom{n-k+2}{2}+k-2$. Chen and Chang\;(2021) showed that $\binom{n-1}{2}-2\leq\varphi_{n}^{3}(2,1)\leq\binom{n-1}{2}$. This upper bound was further improved to  $\binom{n-1}{2}-1$ by Chen and Chang\;(DAM, 2022), furthermore, they also gave the exact values of $\varphi_{n}^{3}(2,1)$ for $n\in \{7,8,9\}$. In this paper, we continue to determine the exact values of $\varphi_{n}^{3}(2,1)$ for $n\ge 10$, i.e., $\varphi_{n}^{3}(2,1)=\binom{n-1}{2}-2$ for $n\geq10$.
		
		{\bf AMS}\,: 05C20; 05C35; 05C38.
		
		{\bf Keywords}\,: \emph{$C_{\leq k}$-free} strong digraph, girth, Tur\'{a}n number, out-degree
		
		\section{Introduction}
		
		\noindent
		
		For terminology not explicitly introduced here, we refer to \cite{1}. All digraphs considered here are finite and simple. Let $D=(V, A)$ be a digraph with vertex set $V=V(D)$ and arc set $A=A(D)$. The \emph{order}\;(\emph{size}) of $D$ is the number of vertices\;(arcs) in $D$.

		Given a family of digraphs $\mathscr{H}$, a digraph $D$ is called \emph{$\mathscr{H}$-free} if it has no isomorphic copy of any member of $\mathscr{H}$. The \emph{Tur\'{a}n number} of $\mathscr{H}$, denoted $ex(n,\mathscr{H})$, is defined to be the maximum size of $\mathscr{H}$-free digraphs on $n$ vertices.

		The study of \emph{Tur\'{a}n-type problems} is one of the central topics in extremal graph theory and it can be traced back to the work of Tur\'{a}n \cite{13} in 1941, known as Tur\'{a}n's Theorem, which is a generalization of Mantel's Theorem due to Mantel \cite{14} in 1907. Since then, Tur\'{a}n-type problems have received considerable attention and have developed extremely rapidly, and a large number of classical and meaningful results for undirected graphs have been given by researchers. Brown and Harary \cite{5} were the first to investigate such problems for digraphs, see, e.g., \cite{5,6,7} for details. However, only a few results have been obtained for digraphs.

		A \emph{directed path} of length $\ell$ of $D$ is a list $x_{0},x_{1},\ldots,x_{\ell}$ of distinct vertices such that $(x_{i},x_{i+1})\in A(D)$ for $0\leq i\leq \ell-1$. A digraph $D$ is \emph{strongly connected}\;(\emph{strong} for short) if every two vertices are mutually reachable by directed paths. A \emph{directed cycle} of length $\ell$ in $D$ is a list $y_{0},y_{1},\ldots,y_{\ell-1}$ of distinct vertices such that $(y_{i},y_{i+1})\in A(D)$ for $0\leq i\leq \ell-2$ and $(y_{\ell-1},y_{0})\in A(D)$. The \emph{girth} $g(D)$ of $D$ is the length of the shortest cycles in $D$. All \emph{paths}\;(resp., \emph{cycles}) considered in this paper are directed paths\;(resp., directed cycles).
		
		Thomassen raised an interesting problem as follows: What is the least number $m(n, k)$ such that every strong digraph with $n$ vertices and size at least $m(n, k)$ has a cycle of length at most $k$\;(see \cite{1}, Chapter $8$, p.331 for details)?
		In 1980, Bermond et al. \cite{2} resolved the above problem by showing the following theorem.
		
		\begin{theorem}[\textnormal{\cite{2}}] \label{k} For $n\geq k+1$ and $k\geq2$,
			$$m(n,k)=\frac{n^{2}+(3-2k)n+k^{2}-k}{2}.$$
			Moreover, a family of extremal digraphs with size $m(n,k)-1$ containing no cycle of length at most $k$ has been recursively constructed.
			
		\end{theorem}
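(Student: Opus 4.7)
The plan is to establish Theorem~\ref{k} by proving a matching lower bound (a $C_{\le k}$-free strong construction with $\binom{n-k+2}{2}+k-2$ arcs) and upper bound (every $C_{\le k}$-free strong digraph on $n$ vertices has at most $\binom{n-k+2}{2}+k-2$ arcs). Adding one then forces a cycle of length at most $k$, and the equality $\binom{n-k+2}{2}+k-1=\tfrac12(n^2+(3-2k)n+k^2-k)$ is routine.

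For the extremal construction I would take vertex set $\{v_1,\dots,v_{n-k+2}\}\cup\{u_1,\dots,u_{k-2}\}$, put a transitive tournament on the $v_i$'s (arcs $v_i\to v_j$ for all $i<j$), delete the single arc $v_1\to v_{n-k+2}$, and append the path $v_{n-k+2}\to u_1\to u_2\to\cdots\to u_{k-2}\to v_1$. Strong connectivity is immediate (every $v_i$ reaches $v_{n-k+2}$ through the tournament, and one returns via the long path), the arc count is $\binom{n-k+2}{2}-1+(k-1)=\binom{n-k+2}{2}+k-2$, and the girth is exactly $k+1$: any cycle must traverse all $k-1$ arcs of the path $v_{n-k+2}\leadsto v_1$, together with a shortest tournament path from $v_1$ to $v_{n-k+2}$, which after the deletion has length $2$ (e.g.\ $v_1\to v_2\to v_{n-k+2}$).

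For the upper bound I would induct on $n$. The base case $n=k+1$ is easy: a strong digraph of girth at least $k+1$ on $k+1$ vertices is precisely $C_{k+1}$, giving $k+1=\binom{3}{2}+k-2$ arcs. For the inductive step, let $D$ be a $C_{\le k}$-free strong digraph on $n\ge k+2$ vertices with $|A(D)|$ maximum, and pick a shortest cycle $C=v_0v_1\cdots v_{\ell-1}$ of length $\ell\ge k+1$; by minimality $C$ is chordless, contributing exactly $\ell$ arcs among its vertices. The key structural observation I would exploit is that for every $u\notin V(C)$, if both $v_j\to u$ and $u\to v_i$ exist, then the cycle $u\to v_i\to\cdots\to v_j\to u$ has length $2+((j-i)\bmod\ell)\ge\ell$, which forces $(j-i)\bmod\ell\in\{\ell-2,\ell-1\}$; combined with strong connectivity this severely limits the exchange of arcs between $u$ and $V(C)$, and the arc count between $V(C)$ and $V(D)\setminus V(C)$ can be bounded linearly. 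I would then pick a vertex $w$ on $C$ (or a low-degree vertex off $C$) of total degree at most $n-k+1$, delete it, and add at most one arc to restore strong connectivity on the remaining $n-1$ vertices while certifying this new arc creates no cycle of length $\le k$ (using that $C$ had length $\ge k+1$ and had no chords). Applying the inductive bound $\binom{n-k+1}{2}+k-2$ to the reduced digraph and accounting for the removed degree gives $|A(D)|\le (n-k+1)+\binom{n-k+1}{2}+k-2=\binom{n-k+2}{2}+k-2$, as required.

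The main obstacle I foresee is the reduction step: deleting a single vertex from $D$ can destroy strong connectivity, and any repair arc added must itself avoid producing a cycle of length $\le k$ with the existing structure. Choosing the right vertex to delete, and proving that such a repair arc exists (or that it is not needed), is the delicate technical heart of the argument; it likely requires a case analysis on whether $\ell=k+1$ or $\ell>k+1$ and a careful use of the chord-freeness of a shortest cycle together with the arc restrictions derived from the girth condition.
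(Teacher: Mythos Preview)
The paper does not prove Theorem~\ref{k}; it is quoted from \cite{2} and used only through Corollary~\ref{n=3}, so there is no in-paper argument to compare against.

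Your construction is correct and is essentially the recursive family the statement refers to (deleting $v_2$ from your example reproduces the same construction on $n-1$ vertices).

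The gap you flag in the inductive step is genuine, not merely a technicality. For $w=v_i$ on a shortest cycle $C$ of length $\ell$, chord-freeness indeed gives $d_D(w)\le 2+(n-\ell)\le n-k+1$, so the degree bound comes for free. The repair arc does not. When $\ell=k+1$---the girth of your own extremal example---the natural repair $v_{i-1}\to v_{i+1}$ closes a cycle of length $\ell-1=k$; and for several vertices of your extremal digraph (each $u_j$, and also $v_1$ and $v_{n-k+2}$) one checks that \emph{no} single added arc restores strong connectivity without creating a cycle of length at most $k$. Thus the induction cannot proceed by deleting an arbitrary vertex of $C$; you would have to prove that some specific vertex can be removed leaving a strong digraph outright (as the interior tournament vertex $v_2$ does in your example), or replace the one-vertex reduction by a different device altogether (contracting $C$, or a direct counting/longest-path argument). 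Your outline supplies neither, so as written the upper bound remains unproved.
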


		Let $C_{\leq k}=\{C_{2}, C_{3},\ldots,C_{k}\}$, where $k\geq2$ and $C_{i}$ is a cycle of length $i$ for $i\in \{2,3,\ldots,k\}$. By $D_{n}^{k}(\xi,\zeta)$ we mean a family of \emph{$C_{\leq k}$-free} strong digraphs on $n$ vertices such that every digraph $D\in D_{n}^{k}(\xi,\zeta)$ satisfies $\delta^{+}(D)\geq\xi$ and $\delta^{-}(D)\geq\zeta$, where both $\xi$ and $\zeta$ are positive integers. Let 
		$$\varphi_{n}^{k}(\xi,\zeta)=\max\{|A(D)|:D\in D_{n}^{k}(\xi,\zeta)\},$$ 
		which can be viewed as the Tur\'{a}n number of \emph{$C_{\leq k}$} in the host family of  strong digraphs on $n$ vertices with  $\delta^{+}(D)\geq\xi$ and $\delta^{-}(D)\geq\zeta$. 
		Denote by $$\Phi_{n}^{k}(\xi,\zeta)=\{D\in D_{n}^{k}(\xi,\zeta): |A(D)|=\varphi_{n}^{k}(\xi,\zeta)\}$$ and call graphs $D\in\Phi_{n}^{k}(\xi,\zeta)$ the extremal graphs of $C_{\le k}$. Note that for a given collection $\{n,k,\xi,\zeta\}$, it is possible that $D_{n}^{k}(\xi,\zeta)=\O$ and we define $\varphi_{n}^{k}(\xi,\zeta)=0$ if it occurs. Clearly, $\varphi_{n}^{k}(\xi,\zeta)=\varphi_{n}^{k}(\zeta,\xi)$ for all collections of $\{n,k,\xi,\zeta\}$. 
		Theorem~\ref{k} tells us that $$\varphi_{n}^{k}(1,1)=m(n,k)-1=\frac{n^{2}+(3-2k)n+k^{2}-k-2}{2}$$ for $n\geq k+1$ and $k\geq2$. For $k=2$, it is easy to see that $\Phi_{n}^{2}(1,1)=\{T_n\}$ for $n\geq3$, where $T_n$ is a strong tournament on $n$ vertices,  and $\Phi_{n}^{2}(1,1)=\O$ for $n\leq 2$.

		One motivation of the study of the function $\varphi_{n}^{k}(\xi,\zeta)$ comes from the following two well-known conjectures proposed by Behzad et al. \cite{26} in 1970
		and Caccetta and H\"{a}ggkvist \cite{4} in 1978.
		\begin{conj}[Behzad, Chartrand and Wall, 1970, \textnormal{\cite{26}}]\label{r}
			Every $r$-regular digraph on $n$ vertices has a cycle of length at most $\lceil n/r\rceil$.
		\end{conj}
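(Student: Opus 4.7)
This is the Behzad–Chartrand–Wall conjecture from 1970, which remains an open problem in general (it is a special case of the closely related Caccetta–H\"{a}ggkvist conjecture of 1978). My plan would therefore be to attempt the standard BFS / out-neighbourhood expansion approach and identify where it breaks down, rather than to claim a full proof.

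The first step is a proof by contradiction: suppose $D$ is $r$-regular on $n$ vertices with girth $g\ge \lceil n/r\rceil+1$, so $gr>n$. Fix $v\in V(D)$ and, for $i\ge 0$, let $L_i=\{u\in V(D):d(v,u)=i\}$ denote the $i$-th directed BFS level from $v$. Because any walk of length at most $g-1$ starting at $v$ that repeats a vertex would contain a closed walk, hence a cycle, of length strictly less than $g$, the sets $L_0,L_1,\dots,L_{g-1}$ are pairwise disjoint, and the target inequality is $\sum_{i=0}^{g-1}|L_i|>n$, which contradicts $|V(D)|=n$. To drive this I would use that each vertex of $L_{i-1}$ has exactly $r$ out-arcs and these can only land in $L_0\cup \cdots\cup L_i$ (they cannot jump further); for $r=2$ an ear-decomposition argument pins down the back-arcs tightly enough to close the count and yields the conjecture, and for small $r$ the count can be pushed by careful case work on the back-arc distribution.

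The main obstacle, and the reason the conjecture has stood for more than fifty years, is that nothing rules out many back-arcs from $L_{i-1}$ into $L_0\cup \cdots\cup L_{i-2}$ without creating a cycle of length below $g$, so $|L_i|$ can be much smaller than $r|L_{i-1}|$ and the naive expansion bound collapses. If the direct count stalls I would fall back on spectral methods: for an $r$-regular digraph with girth $g$, the adjacency matrix $A$ satisfies $\operatorname{tr}(A^k)=0$ for $1\le k<g$, and combined with the trivial eigenvalue $r$ coming from the all-ones vector this places identities on the remaining spectrum of $A$ that restrict how large $g$ can be relative to $n$ and $r$. Converting these eigenvalue constraints into the sharp bound $g\le \lceil n/r\rceil$ is precisely the genuinely hard step that no one has managed in full generality, and is where I would expect my plan to get stuck.
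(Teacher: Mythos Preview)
Your assessment is correct: the statement in question is the Behzad--Chartrand--Wall \emph{conjecture}, and the paper does not prove it. Immediately after stating Conjectures~\ref{r} and~\ref{minimum out-degree}, the authors write ``The two conjectures remain widely open,'' and they cite the conjecture only as motivation for studying the function $\varphi_n^k(\xi,\zeta)$ (via the observation that the conjecture is equivalent to $\varphi_n^{\lceil n/r\rceil}(r,r)=0$). There is no proof in the paper to compare against.

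Your discussion of the BFS/level-set expansion approach and the spectral trace approach, together with your identification of the back-arc obstruction as the point where naive counting collapses, is an accurate summary of why the problem is hard; this goes well beyond anything the paper attempts, since the paper makes no effort to attack the conjecture itself.
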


		\begin{conj}[Caccetta and H\"{a}ggkvist, 1978, \textnormal{\cite{4}}]\label{minimum out-degree}
			Every digraph on $n$ vertices of minimum out-degree at least $r$ has a cycle of length at most $\lceil n/r\rceil$.
		\end{conj}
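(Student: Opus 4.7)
The statement just quoted is the 1978 Caccetta--H\"{a}ggkvist conjecture, one of the best known open problems in extremal digraph theory; it remains unsolved even in the triangle case $r=\lceil n/3\rceil$, $g(D)\le 3$. Any honest ``plan'' is therefore a strategy toward a proof rather than a complete argument, and I would pursue the classical BFS/layered-neighbourhood approach, which delivers the conjectured bound in several special cases and a bound weaker by a constant factor in general.

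First I would fix a vertex $v_{0}\in V(D)$ and introduce the level sets $L_{i}:=\{u\in V(D):\mathrm{dist}_{D}(v_{0},u)=i\}$, writing $k:=\lceil n/r\rceil$. Assuming for contradiction that $g(D)>k$, no vertex $u\in L_{i}$ can have an out-neighbour in $L_{j}$ with $j\le i-k+1$, since such a back-arc together with the BFS shortest path from $v_{0}$ would close a cycle of length at most $k$; hence all $\ge r$ out-neighbours of $u$ must lie in $L_{i-k+2}\cup\cdots\cup L_{i+1}$. Iterating these local constraints yields a geometric lower bound on $\sum_{j\le i}|L_{j}|$, and the first-stage objective is to push this sum past $n$ to force a contradiction.

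To tighten what BFS alone yields, from the naive $g(D)\le 2\lceil n/r\rceil$ down to the conjectured $\lceil n/r\rceil$, one must choose $v_{0}$ far more carefully --- for example, as a vertex minimising a weighted combination of $|N^{+}(v_{0})|$ and $|N^{-}(v_{0})|$, or one produced by a random sampling argument --- and couple the BFS counts with a finer tool: Shen-type double counting of second out-neighbourhoods, an eigenvalue comparison between the adjacency matrix and its block structure on the layers, or the flag-algebra/SDP framework used by Hladk\'{y}, Kr\'{a}l', and Norin for the triangle case $r\approx n/3$.

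The main obstacle --- and the reason the conjecture has resisted proof for more than four decades --- is precisely the factor-of-two gap between what layered BFS gives and what is being asked. Even the triangle case is open, with the current record replacing $n/3$ by approximately $0.3465\,n$. A realistic outcome of the program above is therefore either an improved constant for a specific small value of $k$ (as in work of Shen, Hamburger--Haxell--Kostochka, and Hladk\'{y}--Kr\'{a}l'--Norin), or a proof of the conjecture restricted to a natural subclass of digraphs (vertex-transitive, of bounded independence number, or containing a Hamilton path), rather than a full resolution of the statement in the generality asserted.
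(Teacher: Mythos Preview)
Your assessment is correct: the statement is the Caccetta--H\"{a}ggkvist conjecture, and it is genuinely open. The paper does not prove it either; it is stated there as Conjecture~1.3, immediately followed by the remark that ``the two conjectures remain widely open,'' and is used only as motivation for studying the functions $\varphi_{n}^{k}(\xi,\zeta)$. There is therefore no ``paper's own proof'' to compare against, and your proposal appropriately reads as a survey of known partial strategies rather than a claimed argument.

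Your summary of the landscape is accurate and matches what the paper cites: the BFS/layered approach yields roughly a factor-of-two gap, Shen-type counting and the Hladk\'{y}--Kr\'{a}l'--Norin flag-algebra work push the constant in the triangle case, and the paper itself quotes the $0.3465n$ record. One small sharpening: your description of the back-arc constraint (``no vertex $u\in L_{i}$ can have an out-neighbour in $L_{j}$ with $j\le i-k+1$'') does not by itself close a short cycle, since the BFS tree only guarantees a path \emph{from} $v_{0}$ to each vertex, not between arbitrary pairs on different levels; one has to work a bit harder (e.g., via in-neighbourhood layering or the Chv\'{a}tal--Szemer\'{e}di argument) to extract a cycle bound from the level structure. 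But since you are describing a heuristic program rather than a proof, and since the paper itself offers no proof of this conjecture, there is nothing further to critique.
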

		
		The two conjectures remain widely open. Many results on these two conjectures have been obtained, see, e.g., \cite{16,18,19,20,21,22,23,30,31,32,33,34,35,36,37}.
		The following is a simple observation. 
		\begin{obser}\label{equivalence}
			(1) Behzad-Chartrand-Wall Conjecture is true if and only if $\varphi_{n}^{\lceil n/r\rceil}(r,r)=0$, i.e., $D_{n}^{\lceil n/r\rceil}(r,r)=\O$.
			
			(2)		Caccetta-H\"{a}ggkvist Conjecture is true if and only if $\varphi_{n}^{\lceil n/r\rceil}(r,1)=0$, i.e., $D_{n}^{\lceil n/r\rceil}(r,1)=\O$.
		\end{obser}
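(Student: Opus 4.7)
The plan is to establish each of the two equivalences by verifying both implications separately. In each case, the reverse direction ($\Leftarrow$) reduces an arbitrary digraph meeting the conjecture's hypothesis to a strongly connected subdigraph with controlled degrees, while the forward direction ($\Rightarrow$) is essentially direct containment.

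For (2), the $(\Rightarrow)$ direction is immediate: any $D\in D_n^{\lceil n/r\rceil}(r,1)$ satisfies $\delta^+(D)\ge r$ and is $C_{\le\lceil n/r\rceil}$-free, so if the Caccetta--H\"aggkvist conjecture holds we obtain an immediate contradiction. For $(\Leftarrow)$, I would take an arbitrary digraph $D$ with $\delta^+(D)\ge r$ and pass to a terminal (sink) strongly connected component $C$ of $D$. Since no arcs leave $C$, the inequality $\delta^+(D[C])\ge r$ is inherited from $D$; since $C$ is strong of order at least $r+1$, one also has $\delta^-(D[C])\ge 1$. Invoking the hypothesis at order $|C|$, $D_{|C|}^{\lceil|C|/r\rceil}(r,1)=\emptyset$, which forces $D[C]$ to contain a cycle of length at most $\lceil|C|/r\rceil\le\lceil n/r\rceil$; this cycle also lies in $D$, proving Caccetta--H\"aggkvist.

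For (1), the argument follows the same skeleton. The $(\Leftarrow)$ direction uses that every $r$-regular digraph decomposes as a vertex-disjoint union of strong $r$-regular subdigraphs: within any strong component $C$, the identity $\sum_{v\in C}d^+(v)=\sum_{v\in C}d^-(v)=r|C|$ combined with the topological order of the condensation forces zero arcs to cross components. Each strong component is then a strong $r$-regular subdigraph on some $n'\le n$ vertices with $\delta^\pm\ge r$, and the hypothesis at order $n'$ produces a cycle of length at most $\lceil n'/r\rceil\le\lceil n/r\rceil$. The $(\Rightarrow)$ direction reduces the $\delta^\pm\ge r$ hypothesis to the $r$-regular setting of Behzad--Chartrand--Wall: via the bipartite double cover of the arc set (parts $V^+,V^-$ with an edge $v^+u^-$ for each arc $(v,u)$) and a K\"onig-type $r$-factor argument, any $D\in D_n^{\lceil n/r\rceil}(r,r)$ contains an $r$-regular subdigraph, and applying Behzad--Chartrand--Wall to this subdigraph yields the forbidden short cycle.

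The main obstacle is the $r$-factor extraction in the $(\Rightarrow)$ direction of (1); this step is standard folklore, and one can alternatively take the $\delta^\pm\ge r$ strengthening of Behzad--Chartrand--Wall as primary (a common convention in the literature), in which case the proof of (1) becomes strictly parallel to that of (2). Aside from this reduction, the entire argument is a short sequence of inclusions and strong-component bookkeeping.
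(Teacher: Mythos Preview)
The paper offers no proof of this statement --- it is labelled a ``simple observation'' and left at that --- so there is nothing to compare against beyond the implicit expectation that both directions are routine. Your write-up is in fact more careful than the paper. Part (2) and the $(\Leftarrow)$ direction of (1) are handled correctly: passing to a terminal strong component for (2), and using the acyclic ordering of the condensation to see that an $r$-regular digraph splits into vertex-disjoint strong $r$-regular pieces for (1), are exactly the right moves, and your appeal to the hypothesis at the smaller order $|C|$ makes clear that the equivalence is meant globally over all $n$ and $r$.

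The one point deserving caution is the $(\Rightarrow)$ direction of (1), which you already flag. The K\"onig-type bipartite $r$-factor argument you sketch does not go through as stated: a balanced bipartite graph with minimum degree $r$ need not contain a spanning $r$-factor (take parts of size $5$ with $a_1,a_2,a_3$ each joined only to $b_1,b_2$ and $a_4,a_5$ each joined only to $b_3,b_4,b_5$; here $\delta=2$ but $\{b_1,b_2\}$ cannot absorb six edges in a $2$-factor), and the existence of a not-necessarily-spanning $r$-regular subdigraph in an arbitrary digraph with $\delta^{\pm}\ge r$ is not, to my knowledge, a standard folklore fact. Your fallback --- reading the Behzad--Chartrand--Wall conjecture in its $\delta^{\pm}\ge r$ form, as is common in the literature --- is the honest resolution, and is almost certainly what the authors intend given that the observation serves only to motivate the study of $\varphi_n^k(\xi,\zeta)$. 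With that reading, (1) becomes strictly parallel to (2) and your argument is complete.
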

		
		The above observation implies that determining the exact values of $\varphi_{n}^{k}(\xi,\zeta)$ seems quite difficult.
		A special case of Conjecture~\ref{minimum out-degree}, which has received substantial attention, states that $\varphi_{n}^{3}(n/3,1)=0$.
		The best known result for this special case, given by Hladk\'{y} et al.  \cite{23}, is $\varphi_{n}^{3}(0.3465n,1)=0$. 
		As a straightforward corollary of Theorem~\ref{k} for $k=3$, we have
		\begin{corollary}[\textnormal{\cite{2}}]\label{n=3}
			For $n\geq4$, $$\varphi_{n}^{3}(1,1)=\binom{n-1}{2}+1.$$
		\end{corollary}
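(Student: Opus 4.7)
The plan is essentially arithmetic: the corollary follows directly by substituting $k=3$ into the formula for $\varphi_{n}^{k}(1,1)$ already obtained in the excerpt from Theorem~\ref{k}. The discussion immediately after Theorem~\ref{k} records that $\varphi_{n}^{k}(1,1)=m(n,k)-1$ for $n\geq k+1$ and $k\geq 2$, so my first step is to specialize this identity to $k=3$, which yields $\varphi_{n}^{3}(1,1)=m(n,3)-1$ for all $n\geq 4$.

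Next, I would substitute $k=3$ into the closed-form expression $m(n,k)=\frac{n^{2}+(3-2k)n+k^{2}-k}{2}$ from Theorem~\ref{k}. This gives $m(n,3)=\frac{n^{2}-3n+6}{2}$, and hence $\varphi_{n}^{3}(1,1)=\frac{n^{2}-3n+4}{2}$.

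Finally, I would match this quantity with the claimed right-hand side by expanding the binomial coefficient: $\binom{n-1}{2}+1=\frac{(n-1)(n-2)}{2}+1=\frac{n^{2}-3n+2}{2}+1=\frac{n^{2}-3n+4}{2}$, so the two expressions coincide and the corollary is established. There is no genuine obstacle here since the argument is a purely algebraic specialization of Theorem~\ref{k}; the existence of an extremal digraph attaining the bound is already guaranteed by the recursive construction mentioned in the ``moreover'' part of Theorem~\ref{k}, so no additional combinatorial work is required.
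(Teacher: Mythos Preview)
Your proposal is correct and matches the paper's approach exactly: the paper presents this as ``a straightforward corollary of Theorem~\ref{k} for $k=3$'' without further detail, and your explicit substitution and algebraic simplification is precisely the intended argument.
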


		Note that $\varphi_{n}^{k}(\xi^{'},\zeta^{'})\leq \varphi_{n}^{k}(\xi,\zeta)$ if $1\leq\xi\leq \xi^{'}$ and $1\leq\zeta\leq \zeta^{'}$. Thereby, $\varphi_{n}^{3}(\xi,1)\leq \varphi_{n}^{3}(1,1)=\binom{n-1}{2}+1$ for every $\xi\geq1$ and $n\geq4$. Moreover, $\varphi_{n}^{3}(1,1)=0$ for $n\leq 3$ and $\varphi_{n}^{3}(2,1)=0$ for $n\leq6$. Recently, Chen and Chang \cite{3} bounded the values of $\varphi_{n}^{3}(2,1)$ by presenting $\binom{n-1}{2}-2\leq\varphi_{n}^{3}(2,1)\leq\binom{n-1}{2}$ for all $n\geq7$. The lower bound is obtained by constructing a family of digraphs that meet the requirements and the upper bound is obtained by using the structure property of $\Phi_{n}^{3}(1,1)$. The same authors \cite{24} improved the above upper bound to $\binom{n-1}{2}-1$ for $n\geq7$ and further gave the exact values of $\varphi_{n}^{3}(2,1)$ for $7\leq n\leq9$.
		
		\begin{theorem}[\textnormal{\cite{24}}]\label{upper-lower bounds}
			For $n\geq 7$, $$\binom{n-1}{2}-2\leq\varphi_{n}^{3}(2,1)\leq\binom{n-1}{2}-1.$$
			Moreover, $\varphi_{n}^{3}(2,1)=\binom{n-1}{2}-1$ when $n=7,8$ and $\varphi_{n}^{3}(2,1)=\binom{n-1}{2}-2$ when $n=9$.
		\end{theorem}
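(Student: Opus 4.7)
The statement has three ingredients: a lower bound construction valid for all $n\ge 7$, a general upper bound $\binom{n-1}{2}-1$ valid for all $n\ge 7$, and the exact values in the small cases $n\in\{7,8,9\}$. My plan is to attack them in that order.

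For the lower bound I would start from a Bermond--Germa--Heydemann--Sotteau extremal $C_{\le 3}$-free strong digraph $D^{*}$ on $n$ vertices of size $\binom{n-1}{2}+1$ provided by Corollary~\ref{n=3}, and modify it locally so that every vertex has out-degree at least two, while destroying at most three arcs. The recursive construction underlying Theorem~\ref{k} produces a digraph with a ``transitive backbone'' together with one or two ``feedback'' arcs closing the strong structure; the vertices of out-degree $1$ live in a controlled position. I would identify those vertices and replace each of their out-arcs by two carefully chosen forward arcs to non-adjacent successors, so that no $C_2$ or $C_3$ is created. A short case analysis on the number of out-degree-$1$ vertices of $D^{*}$ bounds the arc deficit by three, and the resulting digraph is verified to be strong and $C_{\le 3}$-free.

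To prove the upper bound $\varphi_n^3(2,1)\le\binom{n-1}{2}-1$, I would argue by contradiction: assume $D\in D_n^3(2,1)$ satisfies $|A(D)|\ge\binom{n-1}{2}$. Since $D\in D_n^3(1,1)$, monotonicity combined with Corollary~\ref{n=3} pins $|A(D)|$ to $\binom{n-1}{2}$ or $\binom{n-1}{2}+1$. In the \emph{extremal} subcase $|A(D)|=\binom{n-1}{2}+1$, one uses the explicit shape of members of $\Phi_n^3(1,1)$ from Bermond et al.\ to exhibit a vertex of out-degree exactly $1$ (the ``return'' vertex bridging the transitive backbone), directly contradicting $\delta^+(D)\ge 2$. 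In the \emph{near-extremal} subcase $|A(D)|=\binom{n-1}{2}$, I would either add a single arc to $D$ without creating a $C_2$ or $C_3$ and reduce to the extremal subcase, or use a direct stability argument: a $C_{\le 3}$-free digraph just one arc below the extremal size still inherits most of the rigid structure of $\Phi_n^3(1,1)$, and the same out-degree-$1$ vertex is unavoidable.

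For $n\in\{7,8\}$ I would exhibit explicit constructions attaining $\binom{n-1}{2}-1$ arcs with $\delta^+\ge 2$ and $\delta^-\ge 1$ (e.g.\ by reintroducing one carefully chosen arc to the $\binom{n-1}{2}-2$ construction above), which combined with the general upper bound yields the claimed exact values. For $n=9$ the task is to rule out the value $\binom{8}{2}-1=20$. Here I plan an exhaustive structural analysis: any hypothetical extremal $D\in D_9^3(2,1)$ with $20$ arcs would force a near-transitive tournament on eight of its vertices plus one feedback vertex, and I would then use strongness to compel a $C_2$ or $C_3$ through that ninth vertex. The principal obstacle throughout is the near-extremal subcase of the upper bound and the $n=9$ case analysis, since the near-extremal members of $D_n^3(1,1)$ are not fully classified; one must work with them \emph{ad hoc}, carefully tracking how the recursive Bermond et al.\ families deform under the loss of a single arc.
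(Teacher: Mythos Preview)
This theorem is quoted from \cite{24} and is not proved in the present paper; there is therefore no in-paper proof to compare your proposal against. What the paper does supply are the structural ingredients that \cite{24} presumably exploits: the five-family characterization of $\Phi_n^3(1,1)$ (Theorem~\ref{5}), Proposition~\ref{two} asserting that every member of $\Phi_n^3(1,1)$ has at least two vertices of out-degree one, and the small-case Lemmas~\ref{n=7}, \ref{n=8}, \ref{9}. From these one sees that the extremal subcase of your upper-bound argument is immediate (Proposition~\ref{two} alone kills $|A(D)|=\binom{n-1}{2}+1$), and the values for $n\in\{7,8,9\}$ are handled by the cited lemmas together with the explicit digraphs $C_7(1,2)$ and $F_8$.

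The genuine gap in your plan is the near-extremal subcase $|A(D)|=\binom{n-1}{2}$. Your first option, ``add one arc and reduce to $\Phi_n^3(1,1)$'', would indeed finish the argument cleanly (Proposition~\ref{two} would then force an out-degree-one vertex surviving in $D$), but you have not shown that a $C_{\le 3}$-free strong digraph one arc below the extremum always admits an arc that can be added without creating a $C_2$ or $C_3$; maximal $C_{\le 3}$-free digraphs of that size are not ruled out a priori. Your second option, an unspecified ``stability'' argument, is not an argument. The evidence in the paper (the very detailed Characterizations~I--III of the five families $\mathscr D_1,\ldots,\mathscr D_5$ and the way they are deployed in Section~3) strongly suggests that the route taken in \cite{24} is not a one-line reduction but a case analysis that tracks how a hypothetical $D$ with $\binom{n-1}{2}$ arcs sits relative to those families. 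Your lower-bound sketch is also too loose: ``replace each out-arc by two forward arcs'' increases rather than decreases the arc count, and you have not specified which arcs are deleted to avoid short cycles or why strong connectivity survives; the paper indicates the lower bound in \cite{3,24} comes from an explicit construction, not a local surgery on a Bermond et al.\ extremal digraph.
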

		

		In this paper, we continue the work of Chang and Chen~\cite{24} and determine the exact values of $\varphi_{n}^{3}(2,1)$ for all $n\geq10$.
		
		\begin{theorem}\label{10}
			For $n\geq 10$, $$\varphi_{n}^{3}(2,1)=\binom{n-1}{2}-2.$$
		\end{theorem}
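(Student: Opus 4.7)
The lower bound $\varphi_n^3(2,1)\geq\binom{n-1}{2}-2$ is already furnished by the construction in Theorem~\ref{upper-lower bounds}, so the task reduces to strengthening the matching upper bound from $\binom{n-1}{2}-1$ to $\binom{n-1}{2}-2$ for $n\geq 10$. I would argue by contradiction: suppose that $D\in D_n^3(2,1)$ achieves $|A(D)|=\binom{n-1}{2}-1$. Since $\varphi_n^3(1,1)=\binom{n-1}{2}+1$ by Corollary~\ref{n=3}, such a $D$ has exactly two fewer arcs than an extremal digraph in $\Phi_n^3(1,1)$, and because $D$ is $C_2$-free it is an orientation of a simple graph $U(D)$ with $\binom{n-1}{2}-1$ edges; equivalently, the complement $\overline{U(D)}$ has $n$ vertices and $n$ edges, hence must contain at least one cycle. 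This rigidity, together with the recursive description of $\Phi_n^3(1,1)$ due to Bermond et al.~\cite{2}, is the main lever.

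The starting point is the augmentation argument used by Chen and Chang in \cite{24} to prove the bound $\binom{n-1}{2}-1$: they adjoin a single well-chosen arc to $D$ to produce a strong $C_{\leq 3}$-free digraph $D'$ of size $\binom{n-1}{2}$ and then exploit its near-extremality in $\Phi_n^3(1,1)$. What remains for $n\geq 10$ is to show that a \emph{second} augmentation is either possible or that its failure forces a forbidden configuration. Concretely, for every non-adjacent pair $\{x,y\}$ of $D$, adding $x\to y$ or $y\to x$ must complete some $C_2$ or $C_3$, so each such pair is ``certified'' by a small local pattern of common neighbours. Since there are only $n$ such pairs but each certification consumes some of the limited arc budget of $D'$, I would show that the certifications overlap so heavily that some vertex is driven to out-degree $1$ in $D$, contradicting $\delta^+(D)\geq 2$.

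To implement this I would locate a vertex $v$ of small out-degree (using either a direct count when $\delta^+(D)=2$ or a refined averaging argument in the regime $\delta^+(D)\geq 3$, where $|A(D)|=\binom{n-1}{2}-1$ already forces most vertices to have out-degree close to the minimum) and analyse the arcs incident to $N^+(v)$. The $C_3$-free hypothesis forbids arcs from $N^+(v)$ back to $v$ and common out-neighbours of $v$ with any vertex of $N^+(v)$; combined with strong connectivity and the near-extremal arc count, this pins the local structure of $D$ around $v$ down to a short list of configurations aligned with the ``transitive spine plus back-arc'' template of $\Phi_n^3(1,1)$. For each surviving configuration I would verify that either another vertex is driven to out-degree $1$, a short cycle is created, strong connectivity is broken, or the non-adjacency count of $\overline{U(D)}$ exceeds $n$; the threshold $n\geq 10$ enters precisely to exclude small sporadic exceptions, consistent with $\varphi_n^3(2,1)=\binom{n-1}{2}-1$ being attained at $n=7,8$.

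The main obstacle will be this case analysis: the placement of the two ``extra'' non-adjacencies of $\overline{U(D)}$ relative to the Hamiltonian back-arc and the transitive spine of the template admits many patterns, and each needs a separate but parallel argument. I would organise the cases by the cyclic structure of $\overline{U(D)}$ (whether its cycle is short or long, and whether the remaining $n-|\text{cycle}|$ edges form a matching, a path, or attach as pendants) and by the placement of $N^+(v)$ and $N^-(v)$ at the chosen low-out-degree vertex, and use $n\geq 10$ exactly where enough room is needed to force a $C_3$ or a second vertex of out-degree one.
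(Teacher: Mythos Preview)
Your proposal outlines a strategy rather than a proof, and the strategy diverges from the paper's in ways that leave genuine gaps.

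The paper's argument is an \emph{induction on $n$} with base case $n=9$ (Lemma~\ref{9}). For the inductive step one assumes $|A(D)|=\binom{n-1}{2}-1$, i.e.\ $\gamma(D)=n$, picks a vertex $w$ of minimum degree, and splits according to whether $\delta(D)=n-3$ or $\delta(D)\le n-4$, and in the latter case whether $D-w$ is strong. When $D-w$ is strong one gets $D-w\in\Phi_{n-1}^3(1,1)$ exactly, and the full structural classification of Theorem~\ref{5} (the five families $\mathscr D_1,\dots,\mathscr D_5$, with their Characterizations~I--III) is applied family by family. When $D-w$ is not strong, the acyclic component decomposition is used: the last component $D_h$ gives a subdigraph $G_h=D\langle V(D_h)\cup\{w\}\rangle\in D_{n_h+1}^3(2,1)$, and here the \emph{induction hypothesis} (for $n_h\ge 8$) together with the explicit determinations of $\Phi_7^3(2,1)$ and $\Phi_8^3(2,1)$ (Lemmas~\ref{n=7} and~\ref{n=8}) supplies the non-adjacency count needed to force $\gamma(D)\ge n+1$.

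Your plan lacks both of these load-bearing ingredients. First, you do not induct: without the hypothesis $\varphi_\kappa^3(2,1)=\binom{\kappa-1}{2}-2$ for $9\le\kappa<n$, there is no mechanism to control the subdigraphs of intermediate size that inevitably appear once you start deleting vertices or passing to components; the paper invokes it repeatedly. Second, your augmentation step produces a $D'$ with $\binom{n-1}{2}$ arcs, which is still \emph{one below} $\varphi_n^3(1,1)$, so the classification of $\Phi_n^3(1,1)$ does not apply to $D'$; by contrast the paper arranges things so that $D-w$ lands \emph{exactly} in $\Phi_{n-1}^3(1,1)$. Third, the ``certification overlap forces out-degree~$1$'' heuristic and the case split by the cycle structure of $\overline{U(D)}$ are not carried out, and it is not clear they can be without essentially reproducing the component/family analysis the paper performs. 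In particular, you never isolate the boundary cases $n_h\in\{6,7\}$ where the precise structure of $\Phi_7^3(2,1)=\{C_7(1,2)\}$ and $\Phi_8^3(2,1)=\{F_8\}$ (with its in-degree $\ge 2$ property) is what makes the count go through; these are exactly where the threshold $n\ge 10$ bites, and your sketch does not address them.
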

		
		Consequently, by combining Theorem~\ref{upper-lower bounds} and $\varphi_{n}^{3}(2,1)=0$ for $n\leq 6$, we obtain  the exact values of $\varphi_{n}^{3}(2,1)$ for all $n$.

		The rest of this paper is organized as follows. In the next section, we first introduce some notation, and then we present several auxiliary results that will be used in the proof of Theorem~\ref{10}.
		In Section 3, we mainly verify Theorem~\ref{10}. We end in Section 4 with a few concluding remarks.

		\section{Preliminary}
		
		\noindent
		
		In this section, we introduce some notation and present several auxiliary results to verify our main result. 
		
		\subsection{Notation}
		
		\noindent

		The main aim of this subsection is to establish some notation, see e.g., \cite{1}.

		Let $D$ be a digraph. If $(u,v)$ is an arc of $D$, then we say $u$ dominates $v$ or $v$ is dominated by $u$, and $u$\;(resp., $v$) is its \emph{tail}\;(resp., \emph{head}). For convenience, we use $u\rightarrow v$ to mean an arc $(u,v)$. Every two vertices $u$ and $v$ are \emph{adjacent} if $u\rightarrow v$ or $v\rightarrow u$, otherwise they are nonadjacent.
		
		Let $v$ be a vertex of $D$. The \emph{out-neighborhood} of $v$ is the set $N^{+}_{D}(v)=\{u\in V(D):v\rightarrow u\}$, and the \emph{out-degree} of $v$ is $d^{+}_{D}(v)=|N^{+}_{D}(v)|$. The \emph{in-neighborhood} of $v$ is the set $N^{-}_{D}(v)=\{u\in V(D):u\rightarrow v\}$, and the \emph{in-degree} of $v$ is $d^{-}_{D}(v)=|N^{-}_{D}(v)|$. The \emph{neighborhood} of $v$ is the set $N_{D}(v)=\{u\in V(D):u,v\;\text{are} \;\text{adjacent}\}$, and the \emph{degree} of $v$ is $d_{D}(v)=|N_{D}(v)|$. Note that $N_{D}(v)=N^{+}_{D}(v)\cup N^{-}_{D}(v)$. The vertices in $N^{+}_{D}(v), N^{-}_{D}(v)$ and $N_{D}(v)$ are called the \emph{out-neighbors}, \emph{in-neighbors} and \emph{neighbors} of $v$, respectively. By an \emph{$(\alpha,\beta)$-vertex} we mean a vertex with out-degree $\alpha$ and in-degree $\beta$, where both $\alpha$ and $\beta$ are integers.

		Let $\Delta^{+}(D)$ be the \emph{maximum out-degree}, $\Delta^{-}(D)$ be the \emph{maximum in-degree}, and $\Delta(D)$ be the \emph{maximum degree}. Similarly, we denote by $\delta^{+}(D)$ the \emph{minimum out-degree}, $\delta^{-}(D)$ the \emph{minimum in-degree} and $\delta(D)$ the \emph{minimum degree}.

		A digraph $H$ is a \emph{subdigraph} of $D$ if $V(H)\subseteq V(D)$ and $A(H)\subseteq A(D)$. For every such $H$, we denote $N^{+}_{H}(v)=N^{+}_{D}(v)\cap V(H)$\;(resp., $N^{-}_{H}(v)=N^{-}_{D}(v)\cap V(H)$) and $d^{+}_{H}(v)=|N^{+}_{H}(v)|$\;(resp., $d^{-}_{H}(v)=|N^{-}_{H}(v)|$). For any $W\subseteq V(D)$, let $D\langle W\rangle$ be the subdigraph induced by $W$ in $D$, and we denote the digraph $D\langle V(D)\backslash W\rangle$ by $D-W$. If $W$ contains only one vertex say $w$, then we will omit the set brackets and write $D\langle w\rangle$ for $D\langle \{w\}\rangle$. By $D-w$ we mean a subdigraph obtained from $D$ by deleting $w$ and all arcs incident with $w$. Let $X$ and $Y$ be two disjoint vertex sets. The arc set from $X$ to $Y$ will be denoted by $A(X,Y)$, i.e., each arc $(x,y)\in A(X,Y)$ satisfies that $x\in X$ and $y\in Y$.

		A maximal strong subdigraph of $D$ is called a \emph{strong component}\;(\emph{component} for short) of $D$. The components of $D$ have an \emph{acyclic ordering}, namely, they can be labeled $D_{1},D_{2},\ldots,D_{h}$ such that all arcs are from $D_{i}$ to $D_{j}$ for $1\leq i<j\leq h$ (see, e.g., \cite{1} for details).

		\vspace{0.1cm}
		
		\noindent\textbf{Definition 2.1.} For a digraph $D$, $\gamma(D)$ is the number of unordered pairs $\{u,v\}$ of distinct vertices $u,v$ such that they are nonadjacent in $D$. Let $Q$ and $K$ be vertex disjoint subdigraphs of $D$. By $\gamma(Q,K)$ we denote the number of unordered pairs of nonadjacent vertices $\{u,v\}$ in $D$ with $u\in V(Q)$ and $v\in V(K)$. Particularly, if $Q$ contains a unique vertex $w$, then we shall use $\gamma(w,K)$ to mean $\gamma(Q,K)$.

		\subsection{Characterization of $\Phi_{n}^{3}(1,1)$}
		
		\noindent
		
		Note that $D_{n}^{3}(1,1)=\O$ for $n\leq3$. It is clear that $D_n^3(1,1)=\{C_4\}$ when $n=4$. 
		In 2021, Chen and Chang \cite{3} characterized all the extremal digraphs in $\Phi_{n}^{3}(1,1)$, and there are $5$ families of such extremal digraphs for $n\geq 5$, denoted by $\mathscr D_{i}$, where $i\in \{1,2,3,4,5\}$, as shown in Fig.1.

		\begin{figure}[!hbpt]
			\begin{center}
				\includegraphics[scale=0.45]{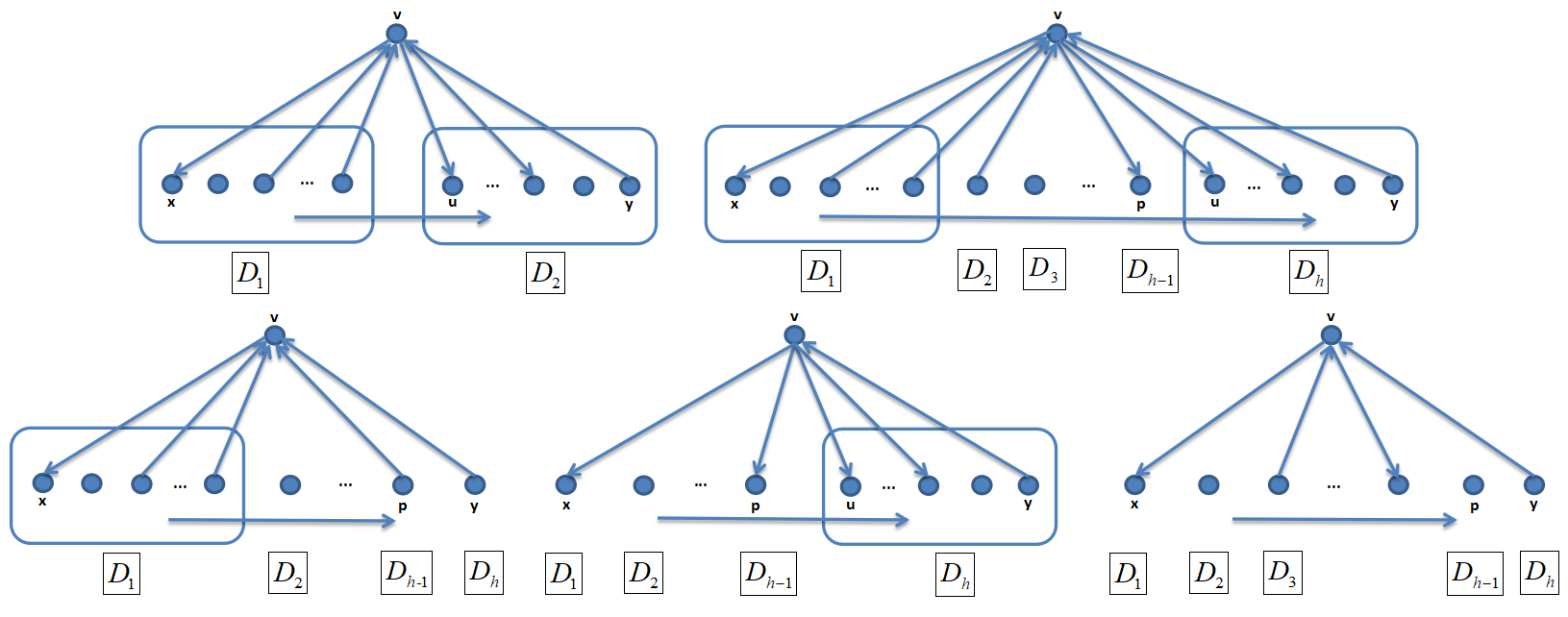}
				\caption{\small
					$\mathscr D_{1},\mathscr D_{2},\mathscr D_{3},\mathscr D_{4}$, and $\mathscr D_{5}$ are arranged in the order of top to bottom, left to right
				}
				\label{fig:framework_of_GP}
			\end{center}
		\end{figure}

		\begin{theorem}[\textnormal{\cite{3}}]\label{5}
			For $n\ge 5$, $\Phi_{n}^{3}(1,1)=\cup_{i=1}^5\mathscr{D}_i$.
		\end{theorem}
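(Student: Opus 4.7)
The plan is to prove both inclusions of $\Phi_n^3(1,1)=\bigcup_{i=1}^5\mathscr{D}_i$. The reverse inclusion is a routine verification: for each family $\mathscr{D}_i$ drawn in Figure~1, I would check that every member is strong, has girth at least $4$, and has exactly $\binom{n-1}{2}+1$ arcs, matching the bound $\varphi_n^3(1,1)=\binom{n-1}{2}+1$ from Corollary~\ref{n=3}. The forward inclusion (every extremal digraph lies in some $\mathscr{D}_i$) is the substantive content, and the remainder of the plan concerns it.

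Let $D\in\Phi_n^3(1,1)$, so $|A(D)|=\binom{n-1}{2}+1$, $D$ is strong, and $g(D)\ge 4$. Since $D$ is $C_2$-free, between any two vertices there is at most one arc, so the underlying undirected graph of $D$ has exactly $|A(D)|$ edges. A direct computation gives
\[
\gamma(D)=\binom{n}{2}-|A(D)|=n-2,
\]
so there are exactly $n-2$ nonadjacent unordered pairs. Summing $\sum_v(n-1-d(v))=2(n-2)$ shows the average number of non-neighbors is $2-4/n<2$, hence some vertex $v$ is nonadjacent to at most one other vertex, i.e.\ $d(v)\ge n-2$. My key idea is to peel $D$ around such a near-universal vertex $v$.

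Partition $V(D)\setminus\{v\}$ into $A=N^+(v)$, $B=N^-(v)$, and (if present) a single non-neighbor $w$. Because $g(D)\ge 4$, any arc $x\to y$ with $x\in A$ and $y\in B$ would complete the triangle $v\to x\to y\to v$; thus $A(A,B)=\emptyset$ and the only arcs between $A$ and $B$ point from $B$ to $A$. Both induced subdigraphs $D\langle A\rangle$ and $D\langle B\rangle$ are themselves $C_{\le 3}$-free, and strong connectivity of $D$ forces $A,B\ne\emptyset$ and sharply constrains how $w$ may attach. Bookkeeping the identity
\[
|A(D)|=|A|+|B|+|A(B,A)|+|A(D\langle A\rangle)|+|A(D\langle B\rangle)|+(\text{arcs incident to }w)
\]
against $|A(D)|=\binom{n-1}{2}+1$ leaves very little slack, which pins down $D\langle A\rangle$ and $D\langle B\rangle$ as almost-extremal $C_{\le 3}$-free digraphs on $|A|$ and $|B|$ vertices. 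Combined with the orientation pattern of $A(B,A)$, a case split on $(|A|,|B|,\text{presence of }w)$ then produces exactly the five families $\mathscr{D}_i$; alternatively, once the right smaller extremal subproblem is isolated, one may proceed by induction on $n$.

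The main obstacle will be the case analysis itself: I must enumerate all possibilities for $(|A|,|B|,w)$ together with the orientation pattern of $A(B,A)$, and in each case rule out configurations that are not strong, contain a forbidden short cycle, or fall short of the arc count $\binom{n-1}{2}+1$, while verifying that exactly the templates $\mathscr{D}_1,\ldots,\mathscr{D}_5$ remain. Keeping the arc count and the strong connectivity consistent at the boundaries between $A$, $B$, $v$, and the optional $w$ without introducing $C_2$ or $C_3$ is where the delicate work lies, and handling the borderline case in which no truly universal vertex exists (so $v$ has one non-neighbor $w$) requires particular care.
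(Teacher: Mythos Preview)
The paper does not prove Theorem~\ref{5}; it is quoted from \cite{3}, and the paper only restates the description of the five families (the material labeled \textbf{Characterization I--III}) in order to use it later. So there is no ``paper's own proof'' to compare against. That said, the way the families are described in the paper tells you how the proof in \cite{3} is organized: one picks a vertex $v$ with $d_D(v)\le n-3$ (such a vertex exists for $n\ge 5$ since $\gamma(D)=n-2$ forces $\sum_u(n-1-d(u))=2(n-2)>n$), observes that $|A(D-v)|\ge \binom{n-2}{2}+2>\varphi_{n-1}^3(1,1)$ so $D-v$ is not strong, and then analyzes the acyclic ordering $D_1,\ldots,D_h$ of the strong components of $D-v$ together with the arcs between them and $v$. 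The five families arise from the cases $(n_1,n_h)\in\{(\ge 4,\ge 4),(\ge 4,1),(1,\ge 4),(1,1)\}$ with $h=2$ or $h\ge 3$; induction on $n$ handles the nontrivial end blocks via $D_i,\widetilde{D_i}\in\Phi_{\bullet}^3(1,1)$ (this is exactly the content of \textbf{Characterization II}).

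Your approach is genuinely different: you pivot on a near-universal vertex rather than a low-degree one. Your opening calculations are correct, and the observation $A(A,B)=\emptyset$ is the right structural lever. One small sharpening you should make explicit: no vertex can actually be universal, because if $d(v)=n-1$ then from any $a\in A=N^+(v)$ every out-arc stays in $A$ (arcs $A\to B$ and $A\to v$ are both forbidden), so $A$ is absorbing and $D$ is not strong. Hence your $v$ always has $d(v)=n-2$ and the non-neighbor $w$ is always present. After that, your case analysis will essentially rediscover the component decomposition of \cite{3} from the other side: the sets $B$, $\{w\}$, $A$ already sit in acyclic order in $D-v$ (no arcs $A\to B$, and the arcs at $w$ are constrained by girth), and the tight arc count forces $D\langle A\rangle$ and $D\langle B\rangle$ to be extremal on their vertex sets, which is where induction enters. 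The main practical cost of your route is that the five families $\mathscr{D}_i$ are \emph{defined} in the paper via the low-degree vertex and the components of $D-v$; to land exactly on those templates you will at some point need to translate your $(A,B,w)$ picture into that language, which is easiest once you notice that your near-universal $v$ and its non-neighbor $w$ swap roles with the paper's low-degree vertex and its distinguished neighbors $x,y$.
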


		Next, we describe each $\mathscr D_{i}\subseteq \Phi_{n}^{3}(1,1)$ in detail to make this paper self-contained, where $i\in \{1,2,3,4,5\}$ and $n\geq5$.

		Let $D$ denote a digraph of $\Phi_{n}^{3}(1,1)$ for $n\geq 5$. For any fixed $v\in V(D)$ with $d_{D}(v)\leq n-3$, the subdigraph $D-v$ has $h\geq 2$ components, denoted by $D_{1},D_{2},\ldots,D_{h}$, such that they have an acyclic ordering. Additionally, $|V(D_{i})|=n_{i}$ for $i\in\{1,2,\ldots,h\}$.
		
		\vspace{0.2cm}
		
		\noindent\textbf{Characterization I. Orders of $D_{i}$ for $2\leq i\leq h-1$}
		
		\vspace{0.2cm}
		
		\textbf{(1).} If $D\in \mathscr D_{1}$, then $D-v$ contains two components and $n_{i}\geq 4$ for $i=1,2$.

		\textbf{(2).} If $D\in \mathscr D_{i}$ for $i\in \{2,3,4,5\}$, then $h\geq 3$. Each component $D_{i}$ contains a unique vertex, namely, $n_{i}=1$ holds, where $2\leq i\leq h-1$. In addition, we enumerate the orders of $D_{1}$ and $D_{h}$ as below:

		\textbf{(2.1).} If $D$ is a member of $\mathscr D_{2}$, then $n_{1}\geq4$ and $n_{h}\geq4$;
		
		\textbf{(2.2).} If $D$ is a member of $\mathscr D_{3}$, then $n_{1}\geq4$ and $n_{h}=1$;
		
		\textbf{(2.3).} If $D$ is a member of $\mathscr D_{4}$, then $n_{1}=1$ and $n_{h}\geq4$;
		
		\textbf{(2.4).} If $D$ is a member of $\mathscr D_{5}$, then $n_{1}=1$ and $n_{h}=1$.

		\vspace{0.2cm}
		
		\noindent\textbf{Characterization II. Subdigraphs $D_{i}$ and $\widetilde{D_{i}}$ for $i=1,h$}
		
		\vspace{0.2cm}
		
		Let $\widetilde{D_{i}}$ denote $D\langle V(D_{i})\cup \{v\}\rangle$, where $i\in\{1,h\}$. We now present the structures of $D_{i}$ and $\widetilde{D_{i}}$ for $i=1,h$ as follows:
		
		\textbf{(1).} If $n_{i}=1$ for $i\in\{1,h\}$, then $v\rightarrow x$ and $y\rightarrow v$, where $V(D_{1})=\{x\}$ and $V(D_{h})=\{y\}$.

		\textbf{(2).} If $n_{i}\geq 4$ for $i\in\{1,h\}$, then $D_{i}\in \Phi^{3}_{n_{i}}(1,1)$ and $\widetilde{D_{i}}\in \Phi^{3}_{n_{i}+1}(1,1)$. This means that $|A(D_{i})|=\binom{n_{i}-1}{2}+1$ and $|A(\widetilde{D_{i}})|=\binom{n_{i}}{2}+1$, respectively. Furthermore, we have $d^{+}_{\widetilde{D_{1}}}(v)=1$, $d^{-}_{\widetilde{D_{1}}}(v)=n_{1}-2$, $d^{+}_{\widetilde{D_{h}}}(v)=n_{h}-2$ and $d^{-}_{\widetilde{D_{h}}}(v)=1$.

		\vspace{0.2cm}
		
		\noindent\textbf{Characterization III. Neighbors of $v$ in $D_{i}$ for $1\leq i\leq h-1$}

		\vspace{0.2cm}
		
		Let $U$ be the union of vertex set of $D_{i}$ for $2\leq i\leq h-1$, i.e., $U=V(D_{2})\cup\ldots\cup V(D_{h-1})$. By $\widetilde{D}\langle U\rangle$ we mean the subdigraph of $D$ induced by $U\cup \{v\}$, namely, $\widetilde{D}\langle U\rangle=D\langle U\cup \{v\}\rangle$. In addition, we denote $X=N^{+}_{\widetilde{D}\langle U\rangle}(v)$ and $Y=N^{-}_{\widetilde{D}\langle U\rangle}(v)$.
		
		\textbf{(1).} Every vertex $r\in Y$ lies in front of every vertex $s\in X$ in this sequence of vertices.
		
		\textbf{(2).} The arc set $A(V(D_{i}),V(D_{j}))$ consists of all arcs from $D_{i}$ to $D_{j}$ for $1\leq i<j\leq h$, except the following three types of arcs:

		\textbf{(2.1).} The arc $(x,y)$, where $x\in N^{+}_{\widetilde{D_{1}}}(v)$ and $y\in N^{-}_{\widetilde{D_{h}}}(v)$;
		
		\textbf{(2.2).} All arcs $(x,r)$, where $x\in N^{+}_{\widetilde{D_{1}}}(v)$ and $r\in Y$;

		\textbf{(2.3).} All arcs $(s,y)$, where $s\in X$ and $y\in N^{-}_{\widetilde{D_{h}}}(v)$.
		
		\vspace{0.1cm}
		We end in this subsection by establishing a proposition with respect to $\Phi_{n}^{3}(1,1)$ as follows:

		\begin{proposition}[\textnormal{\cite{24}}]\label{two}
			Let $D$ be a digraph of $\Phi_{n}^{3}(1,1)$ for $n\geq5$. Then $D$ contains at least two vertices of out-degree one.
		\end{proposition}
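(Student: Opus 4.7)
The plan is to proceed by induction on $n\geq 5$, using Theorem~\ref{5} to split $\Phi_n^3(1,1)$ into the five subfamilies $\mathscr{D}_1,\dots,\mathscr{D}_5$ and to treat each in turn. For the base case $n=5$, the vertex counts implicit in Characterization~I force any $D\in\Phi_5^3(1,1)$ to lie in $\mathscr{D}_5$ (the other four families require more vertices), and this is covered by the direct argument for $\mathscr{D}_5$ below. For the inductive step, fix $D\in\Phi_n^3(1,1)$ with $n\geq 6$ and a reference vertex $v$ with $d_D(v)\leq n-3$ as in the preamble to Characterization~I, yielding components $D_1,\dots,D_h$ of $D-v$.

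For $D\in\mathscr{D}_1,\mathscr{D}_2$ or $\mathscr{D}_4$, the last component $D_h$ has $n_h\geq 4$, and Characterization~II(2) gives $\widetilde{D_h}=D\langle V(D_h)\cup\{v\}\rangle\in\Phi^3_{n_h+1}(1,1)$ with $5\leq n_h+1<n$. The inductive hypothesis supplies two vertices of out-degree one in $\widetilde{D_h}$; neither can be $v$, since $d^+_{\widetilde{D_h}}(v)=n_h-2\geq 2$. Because $D_h$ is the last component in the acyclic ordering of $D-v$, no arc of $D$ leaves $V(D_h)$ toward an earlier component, so $d^+_D(u)=d^+_{\widetilde{D_h}}(u)$ for every $u\in V(D_h)$, and the two witnesses retain out-degree one in $D$.

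For $D\in\mathscr{D}_3$ or $\mathscr{D}_5$, one has $n_h=1$ with $V(D_h)=\{y\}$ and $y\to v$ by Characterization~II(1); since $D_h$ is the last component, $d^+_D(y)=1$. To produce a second out-degree-one vertex I would split on whether $X=N^+_{\widetilde{D}\langle U\rangle}(v)$ is empty. If $X=\emptyset$, then $d^+_D(v)=d^+_{\widetilde{D_1}}(v)+|X|+d^+_{\widetilde{D_h}}(v)=1+0+0=1$, so $v$ is the second witness. If $X\neq\emptyset$, the unique vertex $v_{h-1}$ of the last middle component $D_{h-1}$ does the job: Characterization~III(2.3) would excise the only later-component candidate out-neighbor $y$ of $v_{h-1}$ whenever $v_{h-1}\in X$, making $d^+_D(v_{h-1})=0$ and contradicting $\delta^+(D)\geq 1$, so $v_{h-1}\notin X$. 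Hence every vertex of $X$ sits at position $\leq h-2$ in the acyclic order, and since Characterization~III(1) places $Y$ entirely before $X$, the vertex $v_{h-1}$ (at position $h-1$) also fails to lie in $Y$. It follows that $v_{h-1}\to y$ is the only out-arc of $v_{h-1}$ and $d^+_D(v_{h-1})=1$.

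The main delicate point is the exclusion analysis for $v_{h-1}$ in the $\mathscr{D}_3,\mathscr{D}_5$ case, combining $\delta^+(D)\geq 1$ with Characterization~III to force $v_{h-1}\notin X\cup Y$; the remainder is bookkeeping once the recursive structure in Characterizations~I–III is unpacked. A minor sanity check handles the degenerate configuration $h=3$ in $\mathscr{D}_3$ or $\mathscr{D}_5$, where the only middle vertex $v_2=v_{h-1}$ cannot lie in $X$, so $X=\emptyset$ is automatically forced and the $v$-based witness applies.
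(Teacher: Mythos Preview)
Your argument is correct and follows essentially the same route the paper sketches immediately after stating the proposition (which it cites from \cite{24} without proof). For $n_h\ge 4$ your induction on $\widetilde{D_h}\in\Phi^3_{n_h+1}(1,1)$ reproduces the paper's claim that the two witnesses lie in $V(D_h)$; for $n_h=1$ the paper splits on whether $p\to v$ while you split on whether $X=\emptyset$, but these are compatible (since $p\in Y$ forces $X=\emptyset$ via Characterization~III(1), $p$ being the last vertex of $U$) and both yield $y$ together with one of $v,p$ as the second out-degree-one vertex.
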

		
		Concretely, we have the following:
		
		\textbf{(1).} If $D_{h}$ has order at least $4$, i.e., $n_{h}\geq4$, then such two vertices of out-degree one belong to $V(D_{h})$ that differ from $y$, where $N^{-}_{\widetilde{D_{h}}}(v)=\{y\}$.

		\textbf{(2).} If $D_{h}$ has only one vertex $y$, then either $v$ and $y$ have out-degree one or $p$ and $y$ have out-degree one, where $V(D_{h-1})=\{p\}$. Indeed, if $p\rightarrow v$, then $v$ and $y$ are two vertices of out-degree one; otherwise, $p$ and $y$ are two vertices of out-degree one.

		\subsection{Characterization of $\Phi_{n}^{3}(2,1)$ for $n=7,8$}
		
		\noindent

		This subsection mainly elaborates the specific structure of $\Phi_{n}^{3}(2,1)$ for $n=7,8$ that will be used in a key step in the proof of Theorem~\ref{10}.

		\vspace{0.2cm}
		
		\noindent\textbf{I. Structure of $\Phi_{7}^{3}(2,1)$}
		
		\vspace{0.2cm}

		It is not difficult to deduce that $D_{n}^{3}(2,1)=\O$ for $n\leq 6$ by applying the following lemma:

		\begin{lemma}[\textnormal{\cite{4}}]\label{n/2}
			Every digraph on $n$ vertices of minimum out-degree $2$ has a cycle of length at most $\lceil n/2\rceil$.
		\end{lemma}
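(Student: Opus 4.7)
I would argue by contradiction. Assume $\delta^+(D)\geq 2$ and $g := g(D) \geq \lceil n/2 \rceil + 1$; in particular $n \leq 2g-2$. Let $C = v_0 v_1 \cdots v_{g-1}$ be a shortest directed cycle (indices taken mod $g$), and for each $i \in \{0,1,\ldots,g-1\}$ choose an out-neighbor $u_i \neq v_{i+1}$ of $v_i$; such a $u_i$ exists because $d^+(v_i) \geq 2$.

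First I would verify that every $u_i$ lies off the cycle: if $u_i = v_j$ with $j \not\equiv i+1 \pmod{g}$, then $v_i\,v_j\,v_{j+1}\cdots v_i$ is a directed cycle of length $((i-j)\bmod g) + 1 \leq g-1$, contradicting the minimality of $C$. Consequently the $g$ arcs $v_i \to u_i$ all land in $V(D) \setminus V(C)$, a set of size $n - g \leq g - 2 < g$. Pigeonhole then delivers two distinct indices $i \neq j$ with $u_i = u_j =: u$ off the cycle.

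The main step is to use $d^+(u) \geq 2$ together with the arcs $v_i \to u$ and $v_j \to u$ to produce a cycle of length $<g$, contradicting the choice of $C$. For every out-neighbor $v_k$ of $u$ on $C$, the cycles $v_i \to u \to v_k \to \cdots \to v_i$ and $v_j \to u \to v_k \to \cdots \to v_j$ each have length $\geq g$, which forces $v_k \in \{v_{i+1},v_{i+2}\} \cap \{v_{j+1},v_{j+2}\}$, a set of size at most $1$. Hence $u$ has at least one off-cycle out-neighbor $u'$. I would then iterate the same analysis on $u'$: since the off-cycle vertex set has size $\leq g-2 < g$, the forward expansion from $V(C) \cup \{u\}$ rapidly runs out of room, forcing either a back-arc to $V(C)$ which closes a cycle shorter than $g$, or a coincidence among the off-cycle vertices reached, which closes such a cycle directly.

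The hard part will be the bookkeeping in this iteration: for each newly encountered off-cycle vertex one must track which cyclic indices $k$ remain admissible targets (those not forcing a short cycle through $v_i$ or $v_j$) and show that the admissible set shrinks fast enough to terminate before the walk can escape. If this direct iteration proves too delicate, one can instead invoke the original proof of Caccetta and H\"aggkvist \cite{4}, which handles the case $r=2$ via a more refined enumeration of the out-arcs leaving $V(C)$.
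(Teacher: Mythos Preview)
The paper does not supply a proof of this lemma; it is quoted from Caccetta and H\"aggkvist \cite{4} and used as a black box. Your stated fallback of invoking \cite{4} is therefore exactly what the paper itself does.

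As for the direct attempt preceding your fallback: the setup is correct and standard. With $C$ a shortest cycle of length $g$, every second out-neighbour $u_i$ of $v_i$ lies off $C$, and since $|V(D)\setminus V(C)|=n-g\le g-2$ pigeonhole produces an off-cycle vertex $u$ with two in-arcs $v_i\to u$, $v_j\to u$; your deduction that such a $u$ has at most one out-neighbour on $C$ is also right.

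The genuine gap is the iteration. At the next vertex $u'$ the admissible on-cycle targets are
\[
\{v_{i+1},v_{i+2},v_{i+3}\}\cap\{v_{j+1},v_{j+2},v_{j+3}\},
\]
which has size $2$ whenever $j\equiv i\pm1\pmod g$; so $u'$ may have \emph{both} out-neighbours on $C$ and the walk inside $W=V(D)\setminus V(C)$ terminates after a single step with no contradiction. More generally, at depth $k$ along a path $u=u^{(0)}\to\cdots\to u^{(k)}$ inside $W$ the admissible window widens to $\{v_{i+1},\dots,v_{i+k+2}\}\cap\{v_{j+1},\dots,v_{j+k+2}\}$, whose size \emph{grows} with $k$, so the ``rapidly runs out of room'' heuristic is not justified as stated. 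Turning this into a proof requires an additional counting or structural argument---controlling how many arcs from $W$ can return to $C$ and forcing enough arcs to remain inside $W$ to create a cycle of length at most $|W|\le g-2$---which is essentially the content of the original Caccetta--H\"aggkvist argument you defer to. Nothing is lost relative to the paper, but the sketch preceding your fallback is not yet a proof.
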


		Let $n\geq 2$ denote an integer and $S$ denote a subset of $\{1, 2,\ldots,n-1\}$. The \emph{circulant digraph} $C_{n}(S)$ is defined with the vertex set and arc set as follows (see, e.g., \cite{1}, Chapter $2$, p.80 for details):
		
		\textbf{(1). Vertex set:} $V(C_{n}(S))=\{1, 2,\ldots,n\}$;
		
		\textbf{(2). Arc set:} $A(C_{n}(S))=\{(i,i+j\pmod n):1\leq i\leq n,\;j\in S\}$.

		\begin{lemma}[\textnormal{\cite{24}}]\label{n=7}
			Let $D$ be a digraph of $D_{7}^{3}(2,1)$. Then $D$ is isomorphic to the circulant digraph $C_{7}(1,2)$.
		\end{lemma}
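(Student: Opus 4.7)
The plan is to combine the degree constraints with the upper bound from Theorem~\ref{upper-lower bounds} to pin down the size and out-degree sequence of $D$, and then perform a case analysis anchored on a 4-cycle.

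First, since $\delta^{+}(D)\ge 2$ gives $|A(D)|\ge 2\cdot 7=14$, while Theorem~\ref{upper-lower bounds} with $n=7$ yields $|A(D)|\le \binom{6}{2}-1=14$, we must have $|A(D)|=14$ and every vertex of $D$ has out-degree exactly $2$. Applying Lemma~\ref{n/2} with $r=2$ produces a cycle of length at most $\lceil 7/2\rceil=4$; as $D$ is $C_{\le 3}$-free, $g(D)=4$.

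Fix a $4$-cycle $C\colon v_1v_2v_3v_4v_1$ and let $U=V(D)\setminus V(C)=\{u_1,u_2,u_3\}$. For each $v_i$, its second out-neighbor cannot be $v_{i-1}$ (would create a $C_2$) nor $v_{i+2}$ (would create the triangle $v_i\to v_{i+2}\to v_{i+3}\to v_i$); hence it lies in $U$, so there are exactly four arcs from $V(C)$ to $U$.

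The heart of the argument is a case analysis on how these four arcs distribute among $u_1,u_2,u_3$. The repeatedly applied \emph{forbidden-arc rule} is: whenever $v_i\to u_j$, one must have $u_j\not\to v_i$ (else $C_2$) and $u_j\not\to v_{i-1}$ (else the triangle $v_{i-1}\to v_i\to u_j\to v_{i-1}$). I would first rule out the distributions $(4,0,0)$ and $(3,1,0)$ by observing that a $u_j$ with three in-neighbors on $V(C)$ is forced to send both of its out-arcs into $U\setminus\{u_j\}$, after which the forbidden-arc rule on the recipient leaves it with at most one admissible out-neighbor, contradicting $2$-out-regularity. For the surviving distributions $(2,2,0)$ and $(2,1,1)$, a finer split according to whether the two $v_i$'s attacking a common $u_j$ are consecutive or antipodal on $C$, together with the fact that the induced subdigraph $D\langle U\rangle$ is acyclic (it contains no $C_2$ or $C_3$), eliminates every subcase but one; in that surviving configuration $D$ contains a Hamilton $7$-cycle formed by the cycle arcs of $C$ interleaved with the arcs from $V(C)$ into $U$, and relabelling the vertices in order around this Hamilton cycle exhibits an explicit isomorphism with $C_{7}(1,2)$.

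The main obstacle is the bookkeeping in the case analysis: although each individual forbidden-arc check is mechanical, the number of subcases and the need to simultaneously respect $C_{\le 3}$-freeness, $2$-out-regularity, and strong connectivity make the argument tedious rather than conceptually hard.
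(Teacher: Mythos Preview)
The paper does not prove this lemma; it is quoted from \cite{24} without argument, so there is no in-paper proof to compare your proposal against.

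On its own merits, your opening moves are sound and almost certainly match how \cite{24} proceeds: combining $\delta^{+}(D)\ge 2$ with the upper bound of Theorem~\ref{upper-lower bounds} forces $|A(D)|=14$ and hence $2$-out-regularity, and Lemma~\ref{n/2} then pins the girth at~$4$. One caveat: Theorem~\ref{upper-lower bounds} and the present lemma come from the same source, so you should confirm that the general bound $\varphi_n^3(2,1)\le\binom{n-1}{2}-1$ in \cite{24} is established independently of the $n=7$ classification (it should be, via the structure of $\Phi_n^3(1,1)$); otherwise your first step is circular.

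The case analysis you outline is plausible but not carried out. Your treatment of $(4,0,0)$ is correct. For $(3,1,0)$ the over-constrained ``recipient'' is specifically the $u_j$ that also receives the lone arc from $V(C)$---the other recipient can still attain out-degree~$2$---so the phrasing needs sharpening. The real work sits in the distributions $(2,2,0)$ and $(2,1,1)$, where you have only asserted the outcome; a complete proof must run through the consecutive/antipodal subcases explicitly and invoke strong connectivity to prune configurations. As a plan this is reasonable; as a proof it is incomplete.
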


		Lemma~\ref{n=7} tells us that up to isomorphism $D_{7}^{3}(2,1)$ has only one member $C_{7}(1,2)$ and thus $\Phi_{7}^{3}(2,1)=D_{7}^{3}(2,1)=\{C_7(1,2)\}$. It is clear that the unique digraph $D$ of $\Phi_{7}^{3}(2,1)$ is $2$-regular. Namely, every vertex of $D$ has out- and in-degree two. Therefore, such $D$ contains $14$ arcs.

		\vspace{0.2cm}
		
		\noindent\textbf{II. Structure of $\Phi_{8}^{3}(2,1)$}
		
		\vspace{0.2cm}

		Let $F_{8}$ be a digraph with vertex set $V(F_{8})=\{v_{0},v_{1},\ldots,v_{7}\}$ and arc set $$A(F_{8})=\cup_{i=0}^3\{(v_{2i},v_{2i+1}),(v_{2i},v_{2i+2}),(v_{2i},v_{2i+3}),(v_{2i+1},v_{2i+2}),(v_{2i+1},v_{2i+3})\},$$
		where the subscripts are considered modulo $8$, as shown in Fig. 2. One easily checks that $F_{8}$ is a \emph{$C_{\leq 3}$-free} strong digraph on $8$ vertices with $20$ arcs. Moreover, for each $v_{i}$, if $i$ is even, then $d^{+}_{F_{8}}(v_{i})=3$ and $d^{-}_{F_{8}}(v_{i})=2$; if $i$ is odd, then $d^{+}_{F_{8}}(v_{i})=2$ and $d^{-}_{F_{8}}(v_{i})=3$.

		\begin{figure}[!hbpt]
			\begin{center}
				\includegraphics[scale=0.3]{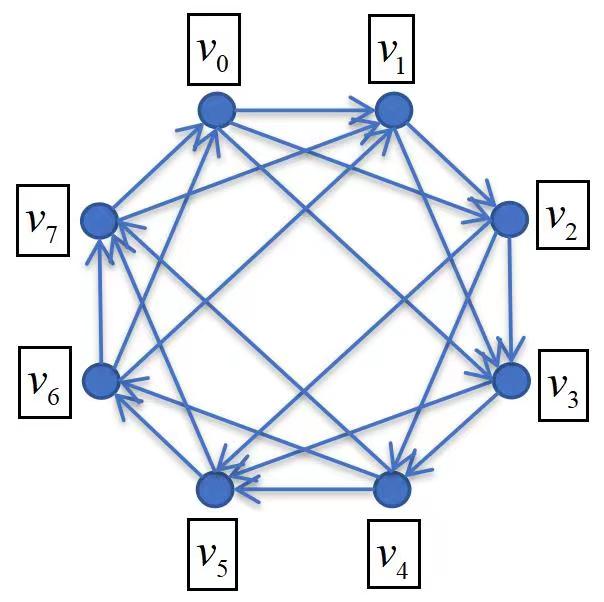}
				\caption{\small
					$F_{8}$
				}
				\label{fig:framework_of_GP}
			\end{center}
		\end{figure}
		
		Let $D$ be a digraph of $\Phi_{8}^{3}(2,1)$. In what follows we present the lemma as below, which states that, up to isomorphism, such $D$ is unique.
		
		\begin{lemma}\label{n=8}
			Let $D$ be a digraph of $\Phi_{8}^{3}(2,1)$. Then $D$ is isomorphic to $F_{8}$.
		\end{lemma}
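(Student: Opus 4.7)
The plan is to establish Lemma~\ref{n=8} in two stages: first verify $F_{8}\in\Phi_{8}^{3}(2,1)$, and then show that any $D\in\Phi_{8}^{3}(2,1)$ is isomorphic to $F_{8}$ by combining a degree-sequence analysis with the structural tools provided by Theorem~\ref{5} and Proposition~\ref{two}.

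Membership of $F_{8}$ is direct from its definition. All arcs advance the cyclic index by $1$, $2$, or $3$, so no 2-cycle can arise (a backward arc would advance by $5$, $6$, or $7$). A short case check on triples of indices whose advances sum to $0 \bmod 8$, using that odd-indexed vertices can advance only by $1$ or $2$, rules out 3-cycles. The Hamiltonian cycle $v_{0}\to v_{1}\to\cdots\to v_{7}\to v_{0}$ gives strong connectivity, and $\delta^{+}(F_{8})=\delta^{-}(F_{8})=2$. Since $|A(F_{8})|=20=\varphi_{8}^{3}(2,1)$ by Theorem~\ref{upper-lower bounds}, $F_{8}\in\Phi_{8}^{3}(2,1)$.

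For uniqueness, let $D\in\Phi_{8}^{3}(2,1)$, so $|A(D)|=20$. From $\sum_{v}d^{+}(v)=20$ and $\delta^{+}(D)\geq 2$, the excess is $\sum_{v}(d^{+}(v)-2)=4$, leaving the candidate out-degree sequences $(3^{4},2^{4})$, $(4,3^{2},2^{5})$, $(4^{2},2^{6})$, $(5,3,2^{6})$, $(6,2^{7})$, with a symmetric list for in-degrees. The two main tools I would use to cut this list down are the following. First, for each vertex $v$, the $C_{\leq 3}$-free condition gives $N^{+}(v)\cap N^{-}(v)=\emptyset$ and $A(N^{+}(v),N^{-}(v))=\emptyset$, which together with strong connectivity forces every return path from $N^{+}(v)$ to $N^{-}(v)$ to pass through $V(D)\setminus(\{v\}\cup N^{+}(v)\cup N^{-}(v))$, placing non-trivial upper bounds on $d^{+}(v)+d^{-}(v)$. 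Second, non-extendability to $\Phi_{8}^{3}(1,1)$: if two arcs $e_{1},e_{2}\notin A(D)$ could be added so that $D+e_{1}+e_{2}\in\Phi_{8}^{3}(1,1)$, then by Proposition~\ref{two} this extension would contain at least two vertices of out-degree $1$; since arc addition is monotone in out-degree, those vertices would satisfy $d^{+}_{D}\leq 1$, contradicting $\delta^{+}(D)\geq 2$. Hence every attempted two-arc completion of $D$ toward the characterization in Theorem~\ref{5} is blocked, which is a strong constraint on the non-adjacent pairs of $D$.

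The main obstacle will be the detailed case analysis that eliminates the asymmetric and high-max-degree sequences $(4,3^{2},2^{5})$, $(4^{2},2^{6})$, $(5,3,2^{6})$, and $(6,2^{7})$; each of these requires a local argument combining the no-arc rule on $N^{+}\to N^{-}$ with the forced strong-connectivity return paths through a small complement, until either a forbidden short cycle surfaces or $\delta^{+}\geq 2$ is violated. Once only the balanced sequence $(3^{4},2^{4})$ remains for both out- and in-degrees, a rigidity argument starting from a $(3,2)$-vertex $v_{0}$ — enumerating its three out-neighbors and two in-neighbors, using the disjointness and no-$N^{+}\to N^{-}$ constraints to identify the remaining two non-neighbors, and iterating the same local analysis at each successive vertex — should force the circulant adjacency pattern of $F_{8}$ and yield $D\cong F_{8}$ up to relabeling.
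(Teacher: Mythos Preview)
Your route is genuinely different from the paper's. The paper does not enumerate out-degree sequences; instead it splits on the minimum \emph{total} degree $\delta(D)$. When $\delta(D)=5$ every vertex has degree exactly $5$, and the construction of $F_{8}$ proceeds from a $(2,3)$-vertex (which must exist since $\sum d^{+}=\sum d^{-}$ and every vertex is $(2,3)$, $(3,2)$ or $(4,1)$), first proving that the two out-neighbors of any $(2,3)$-vertex are adjacent and then propagating this constraint around the digraph. When $\delta(D)\le 4$ the paper deletes a minimum-degree vertex $v$: if $D-v$ is strong, then $|A(D-v)|\ge 16=\varphi_{7}^{3}(1,1)$ forces $D-v\in\Phi_{7}^{3}(1,1)$, and the explicit description of $\Phi_{7}^{3}(1,1)$ from Theorem~\ref{5} (only the families $\mathscr{D}_{3},\mathscr{D}_{4},\mathscr{D}_{5}$ occur at seven vertices) yields a contradiction in each family; if $D-v$ is not strong, a short analysis on the order of the terminal component $D_{h}$ finishes. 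The paper thus leverages the already-known structure at $n=7$ rather than attacking $n=8$ directly.

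Your outline has concrete gaps. The phrase ``with a symmetric list for in-degrees'' is not correct: the hypothesis is only $\delta^{-}(D)\ge 1$, so the in-degree excess is $\sum(d^{-}(v)-1)=12$, not $4$, and the candidate in-degree sequences are far more numerous than your out-degree list. In particular, even after pinning the out-degree sequence to $(3^{4},2^{4})$, nothing you have written forces the existence of a $(3,2)$-vertex --- the data are compatible, for example, with four $(3,3)$-vertices and four $(2,2)$-vertices --- so the starting point of your rigidity step is unjustified. Your first tool yields only $d^{+}(v)+d^{-}(v)\le 6$, which dispatches $(6,2^{7})$ but not $(5,3,2^{6})$ (the out-degree-$5$ vertex may have in-degree $1$, leaving a single transit vertex that is not obviously overloaded). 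Your second tool is a valid observation, but you never actually deploy it against any specific configuration, and it is unclear how ``no two-arc completion lands in $\Phi_{8}^{3}(1,1)$'' translates into a constraint on degree sequences. The substantive work you defer to unspecified ``local arguments'' is exactly the content of the paper's Part~II, and the idea you are missing --- delete a low-degree vertex and invoke the full classification of $\Phi_{7}^{3}(1,1)$ --- is what makes that case analysis short.
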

		
		\noindent\textbf{Proof.} Since $D\in \Phi_{8}^{3}(2,1)$, 
		we have $|A(D)|=\varphi_8^3(2,1)={8-1\choose 2}-1=20$ by Theorem~\ref{upper-lower bounds}. As $\sum\limits_{v\in V(D)}{d_{D}(v)}=2|A(D)|=40$, we have $\delta(D)\leq 5$. The proof will be divided into two parts by distinguishing $\delta(D)=5$ and $\delta(D)\leq 4$.

		\vspace{0.2cm}
		\noindent\textbf{Part I. The minimum degree of $D$ is $5$, i.e., $\delta(D)=5$.}
		\vspace{0.2cm}

		It is straightforward to see that each vertex of $D$ has degree $5$. Recall that an $(\alpha,\beta)$-vertex is a vertex of out-degree $\alpha$ and in-degree $\beta$, where both $\alpha$ and $\beta$ are integers. We can derive that every vertex of $D$ is among one of $(2,3)$-vertex, $(3,2)$-vertex and $(4,1)$-vertex. Due to $\sum\limits_{v\in V(D)}{d_{D}^{+}(v)}=\sum\limits_{v\in V(D)}{d_{D}^{-}(v)}$, there must exist a $(2,3)$-vertex. 
		
		\begin{claim}
			The two out-neighbors of  any $(2,3)$-vertex are adjacent.
		\end{claim}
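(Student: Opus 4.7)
The plan is to argue by contradiction. Suppose $v$ is a $(2,3)$-vertex with $N^+(v)=\{u_1,u_2\}$ and that $u_1,u_2$ are nonadjacent, and write $N^-(v)=\{w_1,w_2,w_3\}$. First I would pin down $N^+(u_i)$ using the $C_{\leq 3}$-free property: $u_i\to v$ is forbidden (no $C_2$), $u_i\to w_j$ is forbidden (else $v\to u_i\to w_j\to v$ is a $C_3$), and $u_i\to u_{3-i}$ is forbidden by the nonadjacency assumption. Thus $N^+(u_i)\subseteq V(D)\setminus(\{v,u_1,u_2\}\cup N^-(v))$. Since $|V(D)|=8$, this set has exactly two elements $x,y$ (the two nonneighbors of $v$), and $\delta^+(D)\geq 2$ forces $N^+(u_1)=N^+(u_2)=\{x,y\}$. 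Then $d^+(u_i)=2$, $d^-(u_i)=3$, and the two extra in-neighbors of each $u_i$ lie in $\{w_1,w_2,w_3\}$; call these sets $T_i$, both of size $2$.

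Next I would split into two cases according to $|T_1\cap T_2|$. In the subcase $|T_1\cap T_2|=1$, every $w_j$ is an in-neighbor of some $u_i$, so the $C_3$-free property forbids $x\to w_j$ for every $j$ (else $u_i\to x\to w_j\to u_i$ is a $C_3$). Combined with the fact that $x$ cannot dominate $v$ (nonadjacent), $u_1$, or $u_2$ (since $u_i\to x$), this leaves at most $y$ as an out-neighbor of $x$, contradicting $\delta^+(D)\geq 2$.

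In the subcase $T_1=T_2$, say $T_1=T_2=\{w_1,w_2\}$, I would use the fact that every vertex has degree exactly $5$ to count neighbors inside $S=\{w_1,w_2,w_3,x,y\}$: $w_1,w_2$ each have $2$ neighbors in $S$, $x,y$ each have $3$, and $w_3$ has $4$ (so $w_3$ is adjacent to every other vertex of $S$). Using that $x,y\not\to w_1,w_2$ by $C_3$-freeness, a short degree equation on the induced subdigraph $D\langle S\rangle$ forces $w_1\not\sim w_2$, $x\sim y$, and (up to swapping $x\leftrightarrow y$) the adjacencies $w_1\to x$ and $w_2\to y$. Then the directions at $w_3$ are forced: $w_3\to w_1$ and $w_3\to w_2$, since otherwise $w_1$ or $w_2$ would have in-degree $0$; and further $w_3\to x$ and $w_3\to y$, since $x\to w_3\to w_1\to x$ and $y\to w_3\to w_2\to y$ would be $C_3$'s. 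But now all four neighbors of $w_3$ in $S$ are out-neighbors, giving $d^-(w_3)=0$ and contradicting $\delta^-(D)\geq 1$.

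The main obstacle I expect is the bookkeeping in the second subcase: solving the adjacency system on $S$ to reduce to the single configuration $w_1\to x$, $w_2\to y$, and then ruling out every orientation of the four arcs at $w_3$. The cleanest route is to introduce indicator variables for the adjacencies within $\{w_1,w_2,x,y\}$, write down the four degree equations, and observe that they force a unique (up to the $x\leftrightarrow y$ symmetry) solution; after that the direction arguments at $w_3$ are short and mechanical.
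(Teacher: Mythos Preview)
Your proof is correct, but the paper's argument is shorter and avoids the case split on $|T_1\cap T_2|$ altogether. After establishing $N^+(u_i)=\{x,y\}$ exactly as you do, the paper simply fixes (by relabeling the $w_j$) $T_1=\{w_2,w_3\}$ and never looks at $T_2$. Since $x$ cannot dominate $v$, $u_1$, $u_2$, $w_2$, or $w_3$, one gets $N^+(x)\subseteq\{y,w_1\}$, and $\delta^+\geq 2$ forces both $x\to y$ and $x\to w_1$. Now $y$ cannot dominate $v$, $u_1$, $u_2$, $w_2$, $w_3$ for the same reasons, and additionally $y\not\to x$ because $x\to y$; hence $N^+(y)\subseteq\{w_1\}$, the contradiction. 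The point is that the two nonneighbors $x,y$ of $v$ can be handled \emph{sequentially} (first pin down $N^+(x)$, then use $x\to y$ to pin down $N^+(y)$) rather than symmetrically, and this makes $T_2$ irrelevant. Your second subcase is a valid workaround, but the degree-counting on $S$ and the orientation chase at $w_3$ are considerably more bookkeeping than necessary.
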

		
		Let $z$ be a $(2,3)$-vertex such that $z\rightarrow z_{i}$ for $i\in\{1,2\}$ and $y_{j}\rightarrow z$ for $j\in\{1,2,3\}$. In addition, we denote $V(D)\backslash (N_{D}(z)\cup \{z\})=\{u,w\}$. Suppose that $z_{1}$ and $z_{2}$ are nonadjacent. One easily checks that $z_{i}\rightarrow u$ and $z_{i}\rightarrow w$ as $d^{+}_{D}(z_{i})\geq 2$ for $i=1,2$. Hence, both $z_{1}$ and $z_{2}$ are $(2,3)$-vertices. We may assume that $y_{2}\rightarrow z_{1}$ and $y_{3}\rightarrow z_{1}$. Since $D$ is \emph{$C_{\leq 3}$-free} and $d^{+}_{D}(u)\geq2$, then $u\rightarrow y_{1}$ and $u\rightarrow w$. However, now $N^{+}_{D}(w)\subseteq \{y_{1}\}$ holds, which contradicts that $d^{+}_{D}(w)\geq2$. Thus, we obtain that the out-neighbors of every $(2,3)$-vertex are adjacent.
		
		Let $V(D)=\{v_{1},v_{2},\ldots,v_{8}\}$. We may assume without loss of generality that $v_{1}$ is a $(2,3)$-vertex such that $v_{1}\rightarrow v_{i}$ for $i=2,3$ and $v_{j}\rightarrow v_{1}$ for $j\in \{6,7,8\}$. Moreover, we may suppose that $v_{2}\rightarrow v_{3}$. As $d^{+}_{D}(v_{3})\geq 2$ and $D$ is \emph{$C_{\leq 3}$-free}, then $N^{+}_{D}(v_{3})=\{v_{4},v_{5}\}$. It yields that $v_{3}$ is a $(2,3)$-vertex and without loss of generality, we assume $v_{8} \rightarrow v_{3}$ and $v_{4}\rightarrow v_{5}$. Again, due to $d^{+}_{D}(v_{5})\geq 2$, we have $v_{5}\rightarrow v_{6}$, $v_{5}\rightarrow v_{7}$ and $v_{5}$ is also a $(2,3)$-vertex. Similarly, we suppose that $v_{6}\rightarrow v_{7}$, implying that $v_{7}$ is a $(2,3)$-vertex. Notice once again that $D$ is $\emph{$C_{\leq 3}$-free}$, we thus have $v_{7}\nrightarrow v_{i}$, where $i\in\{3,4,5,6\}$. This follows that either $v_{7}\rightarrow v_{2}$ or $v_{7}\rightarrow v_{8}$.
		
		If $v_{7}\rightarrow v_{2}$, then $v_{2}\nrightarrow v_{5}$, otherwise $v_{2},v_{5},v_{7}$ will form a cycle of length $3$, contradicting that $D$ is $\emph{$C_{\leq 3}$-free}$. This yields that $v_{2}\rightarrow v_{4}$ as $d^{+}_{D}(v_{2})\geq 2$. One easily sees that $v_{i}\nrightarrow v_{5}$ for $i\in\{1,2,6,7\}$. It is not difficult to check that $v_{i}\rightarrow v_{5}$, where $i\in\{3,4,8\}$. However, we now deduce $v_{i}\nrightarrow v_{8}$ for $1\leq i\leq 7$, which implies $d^{-}_{D}(v_{8})=0$, contradicting the fact $d^{-}_{D}(v_{8})\geq 1$. Thus, $v_{7}\nrightarrow v_{2}$ and $v_{7}\rightarrow v_{8}$ hold. As $v_{5}\rightarrow v_{7}$ and $D$ is $\emph{$C_{\leq 3}$-free}$, it is easily seen that $v_{8}\nrightarrow v_{5}$. It follows that $v_{2}\rightarrow v_{5}$ since $v_{5}$ is a $(2,3)$-vertex. Note that $v_{2}\nrightarrow v_{i}$ for $i\in\{1,6,7,8\}$. Then $d^{+}_{D}(v_{2})\leq 3$. Hence, we deduce that $d^{-}_{D}(v_{2})\geq 2$ as $d_{D}(v_{2})=5$. Moreover, $v_{i}\nrightarrow v_{2}$ holds since $D$ is $\emph{$C_{\leq 3}$-free}$, where $i\in\{3,4,5,6,7\}$. Thereby, we have $v_{8}\rightarrow v_{2}$, yielding that $v_{2}\rightarrow v_{4}$. This means that $v_{2}$ is a $(3,2)$-vertex. Analogously, $v_{4}$, $v_{6}$ and $v_{8}$ are all $(3,2)$-vertices. Consequently, we obtain that $v_{4}\rightarrow v_{6}$, $v_{4}\rightarrow v_{7}$ and $v_{6}\rightarrow v_{8}$.
		
		It is not hard to derive that such $D$ is isomorphic to $F_{8}$, as desired.

		\vspace{0.2cm}
		
		\noindent\textbf{Part II. The minimum degree of $D$ is at most $4$, i.e., $\delta(D)\leq 4$.}
		
		\vspace{0.2cm}

		Let $v$ be a vertex of minimum degree. Hence, $d_{D}(v)=\delta(D)\leq 4$. Here to proceed with this part, we consider the following two cases:
		
		\vspace{0.1cm}
		\noindent\textbf{Case 1. $D-v$ is strong, i.e., it has only one component}
		\vspace{0.1cm}
		
		Obviously, $D-v\in D_{7}^{3}(1,1)$. By Corollary~\ref{n=3},  $|A(D-v)|\le \varphi_7^3(1,1)=16$. On the other hand, since $|A(D)|=20$ and $d_{D}(v)\leq 4$, we have  $|A(D-v)|=|A(D)|-d_{D}(v)\geq 16$. It follows that $|A(D-v)|=16$ and $d_{D}(v)=4$. This means $D-v\in \Phi_{7}^{3}(1,1)$. For convenience, let $G$ stand for $D-v$. By Theorem~\ref{5}, $G\in\mathscr D_{i}$ for some $i=1,2,3,4,5$ on $7$ vertices. Let $v^{*}$ be a vertex of $G$ with minimum degree. Then $d_{G}(v^{*})\leq \lfloor\frac{2|A(G)|}7\rfloor=4=7-3$ and $G-v^{*}$ is not strong by the structure of $\Phi_{7}^{3}(1,1)$. As any digraph of $\mathscr D_{1}\cup \mathscr D_{2}$ has order greater than $7$, we obtain that $G\in \mathscr D_{3}\cup \mathscr D_{4}\cup\mathscr D_{5}$.
		
		\vspace{0.1cm}
		\textbf{Subcase 1.1. $G$ is a member of $\mathscr D_{3}$}

		By the structure of $\Phi_{7}^{3}(1,1)$,  $G-v^{*}$ contains at least three components. As $G\in \mathscr D_{3}$, by \textbf{Characterization I}, it is not hard to check that $G-v^{*}$ has three components, i.e., $D_{1},D_{2},D_{3}$, such that $n_{1}=4$ and $n_{2}=n_{3}=1$. Combined with \textbf{Characterization II}, it is clear that $y\rightarrow v^{*}$ and $p\rightarrow y$, where $V(D_{2})=\{p\}$ and $V(D_{3})=\{y\}$. Since $d_{G}(v^{*})\leq 4$, we have $p$ and $v^{*}$ are nonadjacent. Thereby, we have $d^{+}_{G}(p)=1$ and $d^{+}_{G}(y)=1$. It follows that $p\rightarrow v$ and $y\rightarrow v$ since $d^{+}_{D}(p)\geq2$ and $d^{+}_{D}(y)\geq2$. By \textbf{Characterization III}, every vertex of $D_{1}$ dominates $p$. Hence, $N^{+}_{D}(v)\subseteq \{v^{*}\}$ since $D$ is $\emph{$C_{\leq 3}$-free}$. This contradicts the fact that $d^{+}_{D}(v)\geq2$.

		\vspace{0.1cm}
		\textbf{Subcase 1.2. $G$ is a member of $\mathscr D_{4}$}

		Similarly, as $G\in \mathscr D_{4}$, it is easily seen that $G-v^{*}$ has three components, i.e., $D_{1},D_{2},D_{3}$, such that $n_{1}=n_{2}=1$ and $n_{3}=4$. This implies that $D_{3}$ is a cycle of length $4$ since $D_{3}$ is a $\emph{$C_{\leq 3}$-free}$ strong subdigraph. Suppose that $V(D_{3})=\{x_{1},x_{2},x_{3},x_{4}\}$ such that $x_{i}\rightarrow x_{i+1}$ for $1\leq i\leq3$ and $x_{4}\rightarrow x_{1}$. By \textbf{Characterization II}, we may assume $v^{*}\rightarrow x_{1}$, $v^{*}\rightarrow x_{2}$ and $x_{4}\rightarrow v^{*}$. It is easily seen that $x_{i}\rightarrow v$ as $d^{+}_{D}(x_{i})\geq2$ for $i\in\{1,2,3\}$. Using \textbf{Characterization III}, we have every vertex of $D_{1}$\;(resp., $D_{2}$) dominating $x_{2}$. One can deduce that $v\nrightarrow v^{*}$, $v\nrightarrow x_{4}$ and $v$ do not dominate any vertex of $D_{1}$\;(resp., $D_{2}$). It follows that $N^{+}_{D}(v)=\O$, contradicting the fact that $D$ is strong.
		
		\vspace{0.1cm}
		\textbf{Subcase 1.3. $G$ is a member of $\mathscr D_{5}$}

		Since $G\in \mathscr D_{5}$, then $h=6$ and $n_{i}=1$ for $i\in \{1,2,\ldots,6\}$. Clearly, $v^{*}\rightarrow x$ and $y\rightarrow v^{*}$, where $V(D_{1})=\{x\}$ and $V(D_{6})=\{y\}$. Additionally, as $G$ is a $\emph{$C_{\leq 3}$-free}$ strong subdigraph of $D$, one can deduce $v^{*}\nrightarrow p$, where $V(D_{5})=\{p\}$. \textbf{Characterization III} gives us that $p\rightarrow y$. Moreover, $y\rightarrow v$ holds since $d^{+}_{D}(y)\geq2$. If $p\nrightarrow v^{*}$, then $p\rightarrow v$. Applying \textbf{Characterization III}, it is easily seen that any vertex of $D_{i}$ dominates $p$ for $1\leq i\leq 4$. Since $D$ is $\emph{$C_{\leq 3}$-free}$, we obtain $N^{+}_{D}(v)\subseteq \{v^{*}\}$, a contradiction. If $p\rightarrow v^{*}$, then $d^{+}_{G}(v^{*})=1$, which yields that every vertex of $D_{i}$ dominates $y$ for $i\in\{2,3,4,5\}$. As $d^{+}_{D}(v^{*})\geq 2$, we obtain $v^{*}\rightarrow v$. One easily checks that $N^{+}_{D}(v)\subseteq \{x\}$ since $D$ is $\emph{$C_{\leq 3}$-free}$, which contradicts the assumption that $d^{+}_{D}(v)\geq 2$.

		\vspace{0.1cm}
		\noindent\textbf{Case 2. $D-v$ has at least two components}
		\vspace{0.1cm}
		
		Let $D_{1},D_{2},\ldots,D_{h}$ be components of $D-v$ such that all arcs between them are from $D_{i}$ to $D_{j}$, where $h\geq2$ and $1\leq i<j \leq h$. Note that $n_{h}\geq4$ since $\delta^{+}(D)\geq2$. As $D-v$ has order 7, it is sufficient to consider the following three subcases:
		
		\vspace{0.1cm}
		\textbf{Subcase 2.1. $n_{h}$=4}

		It is obvious that $D_{h}$ is a cycle of length 4 and thus every vertex of $D_{h}$ dominates $v$, namely, $d^{-}_{D}(v)\geq4$. Moreover, as $d^{+}_{D}(v)\geq 2$, we have $d_{D}(v)\geq 6$. This contradicts the fact that $d_{D}(v)\leq 4$.

		\vspace{0.1cm}
		
		\textbf{Subcase 2.2. $n_{h}$=5}

		As $D_{h}$ is a $\emph{$C_{\leq 3}$-free}$ strong subdigraph of $D$, by Corollary~\ref{n=3}, we obtain that $|A(D_{h})|\leq 7$. There are at least three vertices of $D_{h}$ of out-degree one in $D_{h}$, implying that $d^{-}_{D}(v)\geq 3$. Together with $d^{+}_{D}(v)\geq 2$, we thus have $d_{D}(v)\geq5$, which also contradicts the assumption that $d_{D}(v)\leq 4$.

		\vspace{0.1cm}
		
		\textbf{Subcase 2.3. $n_{h}$=6}

		Clearly, $D-v$ has exactly two components $D_{1}$ and $D_{2}$. Analogously, we have $|A(D_{2})|\leq 11$ by applying Corollary~\ref{n=3}. In other words, $\gamma(D_{2})\geq4$. Notice once again that $v\rightarrow x$, where $V(D_{1})=\{x\}$, and $N^{-}_{D}(v)\cap V(D_{2})\neq\O$ since $D$ is strong. Next, we will illustrate that $d_{D}(v)=4$. Suppose not, then $d_{D}(v)\leq3$, which follows that $\gamma(v,D_{2})\geq4$. In addition, since $D$ is $\emph{$C_{\leq 3}$-free}$, then $\gamma(x,D_{2})\geq1$ holds. That is, $|A(D)|\leq \binom{8}{2}-4-4-1=19$, contradicting  our assumption that $|A(D)|=20$. Thereby, we obtain that $d_{D}(v)=4$. If $d^{-}_{D\langle V(D_{2})\cup\{v\}\rangle}(v)=1$, then $d^{+}_{D\langle V(D_{2})\cup\{v\}\rangle}(v)=2$. This means that $D\langle V(D_{2})\cup\{v\}\rangle\in D_{7}^{3}(2,1)$, which contradicts the fact that, up to isomorphic, $D_{7}^{3}(2,1)$ only contains the circulant digraph $C_{7}(1,2)$ by Lemma~\ref{n=7}. Hence, we may assume that $d^{-}_{D\langle V(D_{2})\cup\{v\}\rangle}(v)\geq2$. This implies that there exist at least two vertices, denoted by $u$ and $w$, of $D_{2}$ such that $u\rightarrow v$ and $w\rightarrow v$. Since $D$ is $\emph{$C_{\leq 3}$-free}$, then $x\nrightarrow u,w$. Combined with $d_{D}(v)=4$, we obtain $|A(D)|\leq \binom{8}{2}-4-3-2=19$, contradicting the fact that $|A(D)|=20$.
		
		This completes the proof of the lemma.               \hfill $\blacksquare$

		\subsection{Several other useful lemmas}

		\noindent
		
		In this subsection, we give some helpful auxiliary lemmas that will be used in the proof of Theorem~\ref{10}.

		
		\begin{lemma}\label{minimum degree}
			Let $D$ be a digraph of $\Phi_{n}^{3}(2,1)$ for $n\geq7$. Then $\delta(D)\leq n-3$.
		\end{lemma}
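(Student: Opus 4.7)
The plan is to prove this as a direct consequence of the upper bound on $\varphi_{n}^{3}(2,1)$ already established in Theorem~\ref{upper-lower bounds}, combined with the standard handshake identity and a small arithmetic check.

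First, I would invoke Theorem~\ref{upper-lower bounds} to obtain
\[
|A(D)| \;=\; \varphi_{n}^{3}(2,1) \;\le\; \binom{n-1}{2}-1
\]
for all $n\ge 7$ (the small cases $n=7,8$ give equality, while for $n\ge 9$ the bound is already known). Next, since the sum of degrees equals twice the number of arcs in any digraph, I would compute
\[
\sum_{v\in V(D)} d_{D}(v) \;=\; 2|A(D)| \;\le\; 2\binom{n-1}{2}-2 \;=\; (n-1)(n-2)-2 \;=\; n^{2}-3n \;=\; n(n-3).
\]
Dividing both sides by $n$ shows that the average degree of $D$ is at most $n-3$, and therefore there must exist at least one vertex whose degree attains at most the average. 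This vertex witnesses $\delta(D)\le n-3$, completing the argument.

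There is essentially no obstacle here beyond spotting the clean identity $2\bigl(\binom{n-1}{2}-1\bigr)=n(n-3)$. The proof is a one-line pigeonhole on the degree sum, and the only subtle point is that it relies crucially on the improved upper bound $\binom{n-1}{2}-1$ of Chen and Chang rather than the weaker bound $\binom{n-1}{2}$; the weaker bound would only yield $\delta(D)\le n-3+\tfrac{2}{n}$, which is insufficient to conclude $\delta(D)\le n-3$ as an integer inequality. Hence invoking Theorem~\ref{upper-lower bounds} explicitly (rather than the trivial bound from Corollary~\ref{n=3}) is what makes the argument go through.
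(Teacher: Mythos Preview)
Your proof is correct and follows essentially the same idea as the paper's: both combine the handshake identity with the known upper bound on $|A(D)|$, the paper phrasing it as a contradiction (assume $\delta(D)\ge n-2$, deduce $|A(D)|\ge n(n-2)/2>\binom{n-1}{2}+1$) while you argue directly via the average degree.

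One small correction to your closing commentary: the weaker bound $|A(D)|\le\binom{n-1}{2}$ would in fact already suffice. From it you get $\delta(D)\le n-3+\tfrac{2}{n}$, and since $\delta(D)$ is an integer and $\tfrac{2}{n}<1$ for $n\ge 3$, this forces $\delta(D)\le n-3$ just the same. Indeed, even Corollary~\ref{n=3} alone (giving $|A(D)|\le\binom{n-1}{2}+1$, hence average degree $\le n-3+\tfrac{4}{n}$) is enough for all $n\ge 5$, which is why the paper's own proof invokes that weaker inequality rather than the sharper Theorem~\ref{upper-lower bounds}.
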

		\begin{proof}
			Suppose to the contrary that $\delta(D)\ge n-2$. Then $|A(D)|\ge \frac{n(n-2)}2>{n-1\choose 2}+1$, a contradiction to Theorem~\ref{upper-lower bounds}.	
			
		\end{proof}

		In addition, we use the following lemma as the inductive basis of Theorem~\ref{10}.

		\begin{lemma}[\textnormal{\cite{24}}]\label{9}
			$\varphi_{9}^{3}(2,1)=26=\binom{9-1}{2}-2.$
		\end{lemma}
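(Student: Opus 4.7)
The plan is to establish the two-sided bound $26 \leq \varphi_9^3(2,1) \leq 26$. The lower bound is the $n=9$ specialization of the construction of Chen and Chang that proves the lower half of Theorem~\ref{upper-lower bounds}. Since Theorem~\ref{upper-lower bounds} already gives $\varphi_9^3(2,1) \leq 27$, the task reduces to excluding the existence of a digraph $D \in D_9^3(2,1)$ with $|A(D)| = 27 = \binom{8}{2} - 1$, which I would carry out by contradiction.

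Suppose such a $D$ exists. By Lemma~\ref{minimum degree}, $\delta(D) \leq n - 3 = 6$. I would first dispose of the regular subcase $\delta(D) = 6$: since $\sum_{u \in V(D)} d_D(u) = 54 = 6 \cdot 9$, every vertex has degree exactly $6$, and the constraints $d^+(u) \geq 2$, $d^-(u) \geq 1$, together with $N^+(u) \cap N^-(u) = \emptyset$ (valid because $D$ is $C_2$-free), force each vertex to be a $(2,4)$-, $(3,3)$-, $(4,2)$-, or $(5,1)$-vertex. Mimicking Part~I of the proof of Lemma~\ref{n=8}, I would locate a $(2,4)$-vertex $z$, prove via $C_{\leq 3}$-freeness that its two out-neighbors are adjacent, and propagate the forced arc pattern cyclically around $D$ until a degree constraint is violated.

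In the remaining subcase $\delta(D) \leq 5$, pick $v$ with $d_D(v) = \delta(D)$. If $D - v$ is strong then $D - v \in D_8^3(1,1)$, so $|A(D - v)| \leq \varphi_8^3(1,1) = 22$; combined with $|A(D - v)| = 27 - d_D(v) \geq 22$, this forces $d_D(v) = 5$ and $D - v \in \Phi_8^3(1,1)$, placing $D-v$ into one of the five families $\mathscr D_i$ from Theorem~\ref{5}. Proposition~\ref{two} then supplies two vertices $u_1, u_2$ of $D - v$ with out-degree $1$ in $D - v$, each of which must dominate $v$ in $D$ to satisfy $\delta^+(D) \geq 2$. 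Unpacking Characterizations~I--III for each $\mathscr D_i$, these forced arcs together with $d_D(v) = 5$ either create a cycle of length at most $3$ through $v$, or force some other vertex to have out-degree less than $2$, or imply $|A(D)| \leq 26$. If instead $D - v$ is not strong, let $D_1, \ldots, D_h$ be its strong components in acyclic order; since $\delta^+(D) \geq 2$ and $D_h$ is $C_{\leq 3}$-free, the sink component satisfies $n_h \geq 4$, and a subcase analysis on $n_h \in \{4, 5, 6, 7\}$ paralleling Case~2 of the proof of Lemma~\ref{n=8} yields $|A(D)| \leq 26$ via Corollary~\ref{n=3} applied to $D_h$ together with counts of the non-adjacent pairs $\gamma(v, D_h)$ and $\gamma(V(D_1), D_h)$.

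The hard part will be the sub-subcase where $D - v$ is strong and lies in $\mathscr D_1$ or $\mathscr D_2$, since then $D - v$ contains two large strong subdigraphs rather than singletons, and the incidence patterns of Characterizations~II--III branch more aggressively than in the eight-vertex argument. Tracking which of the three prohibited arc types in Characterization~III.(2) can be reintroduced through $v$ without completing a triangle or dropping $\delta^+$ below $2$ at a degree-$1$ vertex of $D - v$ is what makes the $n = 9$ analysis strictly longer than the proof of Lemma~\ref{n=8}, though conceptually it follows the same template.
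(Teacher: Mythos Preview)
The paper does not prove Lemma~\ref{9} at all: it is quoted from \cite{24} and in fact is already contained verbatim in the last clause of Theorem~\ref{upper-lower bounds}. So there is nothing to compare your argument against here; the result serves purely as the base case for the induction in Section~3.

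Your outline is nonetheless a reasonable reconstruction of how such a proof should go, modelled on the paper's own proof of Lemma~\ref{n=8}. One correction is worth making. You flag the sub-subcase ``$D-v$ strong and $D-v\in\mathscr D_1\cup\mathscr D_2$'' as the hard part, but for $n=9$ this case is vacuous: $D-v$ has $8$ vertices, so $(D-v)-v^{*}$ has $7$; membership in $\mathscr D_1$ requires two components of order at least $4$ each (total $\ge 8$), and membership in $\mathscr D_2$ requires in addition at least one singleton between them (total $\ge 9$). Hence only $\mathscr D_3,\mathscr D_4,\mathscr D_5$ can occur, exactly as in Case~1 of the proof of Lemma~\ref{n=8}, and the analysis is not longer but of the same shape. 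The genuinely new subcase compared with $n=8$ is $n_h=7$ in the non-strong branch, where one would presumably invoke Lemma~\ref{n=8} itself in the role that Lemma~\ref{n=7} played for $n_h=6$; you allude to this only implicitly.
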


		Finally, we present a result of Thomassen \cite{25}.
		
		\begin{lemma}[\textnormal{\cite{25}}]\label{one vertex}
			Let $D$ be a strong digraph of minimum out-degree at least two. Then $D$ contains a vertex $v$ satisfying that $D-v$ is strong.
		\end{lemma}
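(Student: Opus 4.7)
The plan is to invoke the ear decomposition characterization of strong digraphs. Every strong digraph $D$ admits a decomposition
\[
D = D_0 \cup P_1 \cup P_2 \cup \cdots \cup P_t,
\]
where $D_0$ is a directed cycle and each $P_i$ is either a directed path from one vertex of $D_{i-1} := D_0 \cup \cdots \cup P_{i-1}$ to another, with internal vertices disjoint from $V(D_{i-1})$, or a directed cycle meeting $D_{i-1}$ in exactly one vertex. Since $\delta^+(D) \geq 2$, the digraph $D$ cannot consist of a single directed cycle (which has $\delta^+ = 1$), so at least one ear is present, i.e.\ $t \geq 1$.

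Among all ear decompositions of $D$, I would choose one in which the last ear $P_t$ has the largest possible number of internal vertices. The key claim is that this maximum is at least one. To see this, I would argue by contradiction: if every ear in some decomposition were a bare arc, then the hypothesis $\delta^+(D) \geq 2$ supplies a second out-arc at some vertex, and an exchange argument (re-routing this extra arc through a short cycle and absorbing several bare arcs into a single longer ear) produces a new decomposition whose last ear contains internal vertices, contradicting maximality of the chosen one.

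Having secured an ear $P_t$ with an internal vertex $v$, I would verify that $D - v$ is strong by the following routing. All vertices outside $V(P_t) \setminus \{v\}$ sit inside the strong subdigraph $D_{t-1}$, and within $D_{t-1}$ every pair of vertices is mutually reachable. Internal vertices of $P_t$ preceding $v$ are reachable from $D_{t-1}$ through the initial segment of $P_t$, and internal vertices following $v$ reach $D_{t-1}$ through the terminal segment of $P_t$. The remaining connections—from pre-$v$ internal vertices back to $D_{t-1}$, and from $D_{t-1}$ forward to post-$v$ internal vertices, all avoiding $v$—are supplied by the additional out-arcs guaranteed at each internal vertex by $\delta^+(D) \geq 2$, together with the strong connectivity of $D_{t-1}$.

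The main obstacle will be the exchange step used to guarantee a non-trivial last ear. The hypothesis $\delta^+(D) \geq 2$ is indispensable here, since a bare directed cycle is strong yet has no removable vertex. Making this exchange rigorous—essentially, locating an "extra" arc, completing it to a short cycle, and rebuilding the decomposition so that this cycle becomes the backbone of a longer terminal ear—is the technical heart of the proof, and the verification of strong connectivity of $D - v$ in the final paragraph is then a routine case analysis of reachability along the ear.
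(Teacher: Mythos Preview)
The paper does not supply a proof of this lemma; it is simply quoted as a result of Thomassen. So there is no in-paper argument to compare against. Your proposal, however, contains a genuine gap.

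The central claim --- that some ear decomposition of $D$ has a final ear $P_t$ with at least one internal vertex --- is false precisely under the hypothesis $\delta^{+}(D)\ge 2$. Indeed, if $u$ is an internal vertex of the last ear $P_t$, then $u\notin V(D_{t-1})$, so no arc of $D_0,P_1,\ldots,P_{t-1}$ is incident with $u$. Since $P_t$ is the last ear and $D=D_0\cup P_1\cup\cdots\cup P_t$ covers all arcs of $D$, the only arcs of $D$ at $u$ are the two arcs of the path (or cycle) $P_t$ through $u$, giving $d_D^{+}(u)=1$. This contradicts $\delta^{+}(D)\ge 2$. Hence in \emph{every} ear decomposition of such a $D$ the last ear is a bare arc, and the maximum you are optimizing is always zero.

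Your exchange step does not repair this: you derive a contradiction from the hypothesis ``every ear in some decomposition is a bare arc'', but the negation of what you need is ``in every decomposition the \emph{last} ear is a bare arc'' --- and, as just shown, that statement is true, not false. The same observation undermines your final connectivity check: you invoke ``additional out-arcs guaranteed at each internal vertex by $\delta^{+}(D)\ge 2$'', yet internal vertices of the last ear possess no additional out-arcs at all. A salvageable ear-based argument would have to work with the last \emph{non-trivial} ear $P_j$ (necessarily followed only by bare arcs) and remove a suitably chosen internal vertex of $P_j$; but then showing that the remaining internal vertices of $P_j$ can still reach $D_{j-1}$ in $D-v$ requires a further extremal choice (the ``extra'' out-arcs from those vertices may point back into $\{u_1,\ldots,u_m\}$ rather than into $D_{j-1}$), and is not the routine verification you describe.
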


		\section{Proof of Theorem~\ref{10}}
		
		\noindent
		
		Let $D$ be a digraph of $\Phi_{\kappa}^{3}(2,1)$. The proof will be by induction on $\kappa$. Applying Lemma~\ref{9}, we have $|A(D)|=\varphi_{\kappa}^{3}(2,1)=\binom{\kappa-1}{2}-2$ for $\kappa=9$.
		
		Assume that $\varphi_{\kappa}^{3}(2,1)=\binom{\kappa-1}{2}-2$ for all $9\leq \kappa<n$ and we consider $\kappa=n\geq10$. It suffices to prove $|A(D)|\leq\binom{n-1}{2}-2$. Suppose to the contrary that $|A(D)|>\binom{n-1}{2}-2$. By Theorem~\ref{upper-lower bounds}, we thus obtain $|A(D)|=\binom{n-1}{2}-1$. Recall that $\gamma(D)$ is the number of nonadjacent unordered pairs in $D$. Hence, $\gamma(D)=n$ holds. Lemma~\ref{minimum degree} tells us $\delta(D)\leq n-3$.
		
		In the rest of the proof, we would like to break the proof up into two parts, depending on whether the minimum degree of $D$ is equal to $n-3$.
		
		\subsection{The minimum degree of $D$ is exactly $n-3$, i.e., $\delta(D)=n-3$}

		\noindent
		
		\noindent\textbf{Proof.} Observe first that $\sum\limits_{v\in V(D)}{d_{D}(v)}=2|A(D)|$. Since $|A(D)|=\binom{n-1}{2}-1$ and $\delta(D)=n-3$, we have $n(n-3)\leq 2|A(D)|= n^{2}-3n$, which follows that every vertex has degree $n-3$. As $D\in\Phi_{n}^{3}(2,1)$, we have $\delta^+(D)\ge 2$ and $\delta^-(D)\ge 1$. Recall that an $(\alpha,\beta)$-vertex is a vertex of out-degree $\alpha$ and in-degree $\beta$, where both $\alpha$ and $\beta$ are integers. One easily sees that every vertex of $D$ is a $(t,n-t-3)$-vertex for $2\leq t\leq n-4$. Next, we claim that $D$ contains at least one $(2,n-5)$-vertex.

		\begin{claim}
			There is a $(2,n-5)$-vertex in $D$.
		\end{claim}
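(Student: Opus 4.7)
I will argue by contradiction: suppose $D$ contains no $(2,n-5)$-vertex, so $\delta^{+}(D)\geq 3$. Recall from the discussion above that every vertex of $D$ has degree exactly $n-3$, so each vertex has exactly two non-neighbors. Pick a vertex $v$ realizing the minimum out-degree $t_{0}:=\delta^{+}(D)$ (so $t_{0}\geq 3$), write $N^{+}(v)=\{w_{1},\ldots,w_{t_{0}}\}$, and let $\{x,y\}$ denote the two non-neighbors of $v$.

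The plan is to combine a local constraint on $N^{+}(v)$ with a single global double count. The $C_{\leq 3}$-free hypothesis gives a clean constraint at each $w_{i}$: $C_{2}$-freeness forbids $w_{i}\to v$, and any arc $w_{i}\to u$ with $u\in N^{-}(v)$ would close a $C_{3}$ through $v$, so $N^{+}(w_{i})\cap N^{-}(v)=\emptyset$. Hence
\[
N^{+}(w_{i})\subseteq (N^{+}(v)\setminus\{w_{i}\})\cup\{x,y\},
\]
a set of size $t_{0}+1$. Now double-count arcs leaving $N^{+}(v)$: letting $a$ be the number of arcs inside $N^{+}(v)$ and $b$ the number from $N^{+}(v)$ to $\{x,y\}$, one has $a\leq\binom{t_{0}}{2}$ (since $D\langle N^{+}(v)\rangle$ is $C_{\leq 3}$-free, the bound being attained only by a transitive tournament) and trivially $b\leq 2t_{0}$. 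Combined with $\sum_{i}d^{+}(w_{i})=a+b$ and the lower bound $\sum_{i}d^{+}(w_{i})\geq t_{0}^{2}$ coming from our standing assumption, this forces
\[
t_{0}^{2}\leq \binom{t_{0}}{2}+2t_{0}=\frac{t_{0}(t_{0}+3)}{2},
\]
which rearranges to $t_{0}\leq 3$.

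It only remains to rule out the borderline case $t_{0}=3$, and this is the one mildly delicate step. In that case all the inequalities above must be equalities: $a=3$ and $b=6$. The former forces the induced subdigraph on $\{w_{1},w_{2},w_{3}\}$ to be a transitive triangle (a cyclic triangle would violate $C_{3}$-freeness), and the latter forces every $w_{i}$ to dominate both $x$ and $y$. But then the sink of this transitive triangle has out-degree exactly $0+2=2$, contradicting $\delta^{+}(D)\geq 3$. Hence $t_{0}\leq 2$, contradicting $t_{0}\geq 3$, and the claim follows. The main obstacle is precisely this equality case: the counting alone is consistent, and the contradiction has to be read off from the rigid forced local structure rather than from a strict arithmetic inequality.
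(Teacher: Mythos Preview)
Your argument is correct. The key containment $N^{+}(w_{i})\subseteq (N^{+}(v)\setminus\{w_{i}\})\cup\{x,y\}$ is exactly right (using that $N^{+}(v)$ and $N^{-}(v)$ are disjoint in a $C_{2}$-free digraph), the double count $\sum_{i}d^{+}(w_{i})=a+b$ is valid, and the equality analysis at $t_{0}=3$ is clean: $a=3$ forces a transitive tournament on $N^{+}(v)$ by $C_{\leq 3}$-freeness, $b=6$ forces every $w_{i}$ to dominate both $x$ and $y$, and the sink then has out-degree $2$.

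This is, however, a genuinely different route from the paper's. The paper does not analyze the local neighborhood of a minimum-out-degree vertex at all. Instead it invokes Thomassen's lemma (Lemma~\ref{one vertex}): assuming $\delta^{+}(D)\geq 3$, there is a vertex $u$ with $D-u$ strong, hence $D-u\in D_{n-1}^{3}(2,1)$; the induction hypothesis then gives $|A(D-u)|\leq\binom{n-2}{2}-2$, and adding back $d_{D}(u)=n-3$ yields $|A(D)|\leq\binom{n-1}{2}-3$, contradicting $|A(D)|=\binom{n-1}{2}-1$. So the paper's proof is a two-line appeal to external tools already set up for the main induction, whereas your proof is a self-contained local argument that uses neither Thomassen's lemma nor the induction hypothesis. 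Your approach has the advantage of being elementary and independent of the surrounding inductive machinery; the paper's has the advantage of brevity given that those tools are already in hand.
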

		\begin{proof}
			Suppose not, then $\delta^{+}(D)\geq3$. By Lemma~\ref{one vertex}, there exists a vertex $u$ such that $D-u$ is strong. One easily sees that $D-u\in D^{3}_{n-1}(2,1)$ and $n-1\geq 9$. By the induction hypothesis,  $|A(D-u)|\leq \binom{n-2}{2}-2$. Due to $d_{D}(u)=n-3$, we have $|A(D)|=|A(D-u)|+d_{D}(u)\leq \binom{n-1}{2}-3$, contradicting our assumption that $|A(D)|=\binom{n-1}{2}-1$. This proves the claim.
		\end{proof}

		We further show that each $(2,n-5)$-vertex satisfies that its out-neighbors are adjacent.
		\begin{claim}\label{adjacent}
			The out-neighbors of every $(2,n-5)$-vertex are adjacent.
		\end{claim}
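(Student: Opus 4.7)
The plan is to derive a contradiction from the hypothesis that some $(2,n-5)$-vertex $z$ has two nonadjacent out-neighbors. Write $N_D^+(z)=\{z_1,z_2\}$ and $N_D^-(z)=\{y_1,\ldots,y_{n-5}\}$; since $d_D(z)=n-3$, there are exactly two vertices of $V(D)\setminus\{z\}$ nonadjacent to $z$, which I denote by $u,w$. My first step is to pin down $N_D^+(z_i)$ completely. Because $D$ is $C_{\le3}$-free, $z_1\nrightarrow z$ (else $C_2$) and $z_1\nrightarrow y_j$ for every $j$ (else $z\to z_1\to y_j\to z$ is a $C_3$); together with the hypothesis $z_1\nrightarrow z_2$, this forces $N_D^+(z_1)\subseteq\{u,w\}$, and $\delta^+(D)\ge 2$ then yields $N_D^+(z_1)=\{u,w\}$. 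The same argument gives $N_D^+(z_2)=\{u,w\}$. Since $D$ has no $C_2$, $d_D^-(z_1)=d_D(z_1)-2=n-5$, and none of $u,w,z_2$ can be an in-neighbor of $z_1$; with $z\in N_D^-(z_1)$, the remaining $n-6$ in-neighbors of $z_1$ all lie in $\{y_1,\ldots,y_{n-5}\}$. Setting $Y_i:=N_D^-(z_i)\cap\{y_1,\ldots,y_{n-5}\}$ for $i=1,2$, I obtain $|Y_1|=|Y_2|=n-6$.

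Next I would bound the out-neighborhoods of $u$ and $w$. Since $u$ is nonadjacent to $z$, we have $u\nrightarrow z$; since $z_1\to u$ and $z_2\to u$, the $C_2$-free condition forces $u\nrightarrow z_1,z_2$; and $u\nrightarrow y_j$ whenever $y_j\in Y_1\cup Y_2$, as otherwise $u\to y_j\to z_i\to u$ would be a $C_3$. Hence
\[
N_D^+(u)\subseteq\{w\}\cup\bigl(\{y_1,\ldots,y_{n-5}\}\setminus(Y_1\cup Y_2)\bigr),
\]
and the identical argument yields the same bound for $N_D^+(w)$.

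The claim now follows from a short case split on $|Y_1\cup Y_2|$, which is either $n-6$ or $n-5$ since $|Y_i|=n-6$. If $|Y_1\cup Y_2|=n-5$, then $N_D^+(u)\subseteq\{w\}$, contradicting $\delta^+(D)\ge 2$. Otherwise there is a unique $y_*\in\{y_1,\ldots,y_{n-5}\}\setminus(Y_1\cup Y_2)$, so $\delta^+(D)\ge 2$ forces $u\to w$ and $u\to y_*$; but $u\to w$ precludes $w\to u$ (no $C_2$), hence $N_D^+(w)\subseteq\{y_*\}$, again contradicting $\delta^+(D)\ge 2$. The main (and essentially only) subtlety is verifying that the out-neighborhoods of $z_1,z_2$ are pinned down \emph{exactly} to $\{u,w\}$; once that is secured, the cardinalities $|Y_i|=n-6$ drop out, and the case analysis on the single vertex $y_*$ closes both possibilities in one stroke.
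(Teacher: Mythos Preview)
Your proof is correct and follows essentially the same approach as the paper's: assume $z_1,z_2$ are nonadjacent, force $N_D^+(z_i)=\{u,w\}$, then show that $u$ and $w$ cannot both have out-degree at least $2$. The paper's version is marginally shorter because it works with $Y_1$ alone (so there is automatically a unique $y_1\notin Y_1$) and thereby avoids your case split on $|Y_1\cup Y_2|$.
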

		\begin{proof}
			Let $z$ be an arbitrary $(2,n-5)$-vertex. Denote by $N^{+}_{D}(z)=\{z_{1},z_{2}\}$ and $N^{-}_{D}(z)=\{y_{1},y_{2},\ldots,y_{n-5}\}$. Additionally, we denote $V(D)\backslash (N_{D}(z)\cup \{z\})=\{x_{1},x_{2}\}$. Suppose to the contrary that $z_{1}$ and $z_{2}$ are not adjacent. Since $D$ is a $\emph{$C_{\leq 3}$-free}$ digraph with $\delta^{+}(D)\geq 2$, $z_{i}\rightarrow x_{j}$ for $i,j\in\{1,2\}$. It follows that both $z_{1}$ and $z_{2}$ are $(2,n-5)$-vertices. Without loss of generality, we may assume that $y_{i}\rightarrow z_{1}$, where $i\in\{2,3,\ldots,n-5\}$. As $d^{+}_{D}(x_{1})\geq2$ and $D$ is $\emph{$C_{\leq 3}$-free}$, we thus obtain $x_{1}\rightarrow x_{2}$ and $x_{1}\rightarrow y_{1}$. This  yields  $N^{+}_{D}(x_{2})\subseteq\{y_{1}\}$, which contradicts the fact that $d^{+}_{D}(x_{2})\geq 2$. As a consequence, the claim follows.
		\end{proof}

		Label the vertices of $D$ by $v_{1},v_{2},\ldots,v_{n}$. We may assume without loss of generality that $v_{1}$ is a $(2,n-5)$-vertex satisfying that $v_{1}\rightarrow v_{i}$ for $i=2,3$ and $v_{j}\rightarrow v_{1}$ for $j\in\{6,7,\ldots,n\}$. By the previous claim, we may suppose $v_{2}\rightarrow v_{3}$. One easily sees that $v_{3}\rightarrow v_{i}$ for $i=4,5$ since $D$ is $\emph{$C_{\leq 3}$-free}$ and $d^{+}_{D}(v_{3})\geq 2$. This implies that $v_{3}$ is a $(2,n-5)$-vertex and we may assume that $v_{i}\rightarrow v_{3}$ for $8\leq i\leq n$. By Claim~\ref{adjacent}, we have $v_{4}$ and $v_{5}$ are adjacent and without loss of generality $v_{4}\rightarrow v_{5}$ holds. As $D$ is $\emph{$C_{\leq 3}$-free}$, we obtain $v_{5}\nrightarrow v_{i}$ for $i\in\{1,2,3,4\}$, and $v_{5}\nrightarrow v_{j}$ for  $j\in\{8,9,\ldots,n\}$. It is clear that $N^{+}_{D}(v_{5})=\{v_{6},v_{7}\}$ since $d^{+}_{D}(v_{5})\geq 2$, yielding that $v_{5}$ is a $(2,n-5)$-vertex. The same reason shows that $v_{6}$, and $v_{7}$ are adjacent and we may assume $v_{6}\rightarrow v_{7}$. We now deal with the connection of $v_{2}$ and $v_{5}$ by distinguishing the following two cases:

		\vspace{0.1cm}
		
		\noindent\textbf{Case 1. $v_{2}\rightarrow v_{5}$}
		\vspace{0.1cm}
		
		Since $v_{5}$ is a $(2,n-5)$-vertex, it is not difficult to see that $v_{5}$ is dominated by $n-8$ vertices among $v_{8},v_{9},\ldots,v_{n}$. Suppose without loss of generality that $v_{i}\rightarrow v_{5}$ for $9\leq i\leq n$. One easily checks that $v_{7}\nrightarrow v_{i}$ and $v_{7}\nrightarrow v_{j}$ since $D$ is $\emph{$C_{\leq 3}$-free}$, where $i\in\{2,3,4\}$ and $j\in\{9,10,\ldots,n\}$. It follows that $v_{7}\rightarrow v_{8}$ because $d_D^+(v_7)\ge 2$. In addition, $v_{6}$ and $v_{2}$\;(resp., $v_{3}$) are nonadjacent as $D$ is $\emph{$C_{\leq 3}$-free}$. Because $d_{D}(v_{6})=n-3$, $v_{6}$ is adjacent to every vertex except $v_{2}$ and $v_{3}$. As $v_{6}\rightarrow v_{7}\rightarrow v_{8}$, then $v_{6}\rightarrow v_{8}$ holds. Moreover, since $D$ is $\emph{$C_{\leq 3}$-free}$, any vertex in $\{v_{9},v_{10},\ldots,v_{n}\}$ dominates $v_{6}$. Let $R=\{v_{1},v_{2},\ldots,v_{8}\}$ and $S=\{v_{9},v_{10},\ldots,v_{n}\}$. Since $D$ is $\emph{$C_{\leq 3}$-free}$, we have  $A(R,S)=\O$, contradicting the fact that $D$ is a strong digraph.

		\vspace{0.1cm}
		
		\noindent\textbf{Case 2. $v_{2}\nrightarrow v_{5}$}
		\vspace{0.1cm}

		As $v_{5}$ is a $(2,n-5)$-vertex, then $v_{i}\rightarrow v_{5}$ for $8\leq i\leq n$. Let $R^{*}=\{v_{1},v_{2},\ldots,v_{7}\}$ and $S^{*}=\{v_{8},v_{9},\ldots,v_{n}\}$. Analogously, since $D$ is $\emph{$C_{\leq 3}$-free}$, we have $A(R^{*},S^{*})=\O$. Whereas this contradicts our assumption that $D$ is strong.  \hfill $\blacksquare$

		\subsection{The minimum degree of $D$ is at most $n-4$, i.e., $\delta(D)\leq n-4$.}
		
		\noindent
		
		\noindent\textbf{Proof.} Let $w$ be a vertex of $D$ with minimum degree. Then $d_{D}(w)=\delta(D)\leq n-4$. In the remaining proof in this subsection, we address  $D-w$ by distinguishing two parts as follows, namely, $D-w$ is strong and $D-w$ is not strong:
		
		\vspace{0.2cm}
		
		\noindent\textbf{Part I. $D-w$ is strong}
		
		\vspace{0.2cm}
		
		Notice first that $D-w\in D_{n-1}^{3}(1,1)$. Hence, $|A(D-w)|\leq\varphi_{n-1}^3(1,1)=\binom{n-2}{2}+1$. Together with $d_{D}(w)\leq n-4$, we have $|A(D)|=|A(D-w)|+d_{D}(w)\leq \binom{n-1}{2}-1$. Recall that $|A(D)|=\binom{n-1}{2}-1$, which implies that both $|A(D-w)|=\binom{n-2}{2}+1$ and $d_{D}(w)=n-4$ hold. For convenience, we denote $G=D-w$.
		One easily sees that $G\in \Phi^{3}_{n-1}(1,1)$. In the rest of proof in this part, we will use the structure of digraphs of $\Phi^{3}_{n-1}(1,1)$. By applying Theorem~\ref{5}, one can separate $G$ into five families of digraphs on $n-1$ vertices, namely, \{$\mathscr D_{1},\mathscr D_{2},\mathscr D_{3},\mathscr D_{4},\mathscr D_{5}$\}, as shown in Fig.1\;(see Subsection 2.2 for details).
		
		Let $v$ be a vertex of $G$ such that $d_{G}(v)\leq n-4$ and $G-v$ has $h$ components, denoted by $D_{1},D_{2},\ldots,D_{h}$, satisfying that there is no arc from $D_{j}$ to $D_{i}$ for $1\leq i<j\leq h$, where $h\geq2$. Denote by $|V(D_{i})|=n_{i}$ for $i\in\{1,2,\ldots,h\}$. We divide the discussion into five cases as follows:
		
		\vspace{0.1cm}
		\noindent\textbf{Case 1. $G\in \mathscr D_{1}$}
		\vspace{0.1cm}

		Note first that $G-v$ contains exactly two components $D_{1}$ and $D_{2}$ such that $n_{1}\geq4$ and $n_{2}\geq4$. By \textbf{Characterization II}, we have $d^{+}_{D\langle V(D_{2})\cup \{v\}\rangle}(v)=n_{2}-2\geq 2$ and $d^{-}_{D\langle V(D_{2})\cup \{v\}\rangle}(v)=1$. Moreover, $D\langle V(D_{i})\cup \{v\}\rangle\in \Phi^{3}_{n_{i}+1}(1,1)$ for $i=1,2$. Applying Proposition~\ref{two}, there are two vertices of out-degree ones that differ from $y$ of $D_{2}$, where $N^{-}_{D\langle V(D_{2})\cup \{v\}\rangle}(v)=\{y\}$. Hence, one easily finds a vertex $u\in N^{+}_{D\langle V(D_{2})\cup \{v\}\rangle}(v)$ such that $d^{+}_{G}(u)=1$. Since $d^{+}_{D}(u)\geq 2$, we thus obtain $u\rightarrow w$. \textbf{Characterization III} gives us that every vertex of $D_{1}$ dominates $u$. It is not difficult to check that $w$ does not dominate any vertex of $D_{1}$ and $w\nrightarrow v$ as $D$ is $\emph{$C_{\leq 3}$-free}$. It follows that $N^{+}_{D}(w)\subseteq V(D_{2})$. Denote by $D_{2}^{*}=D\langle V(D_{2})\cup\{v,w\}\rangle$. One easily sees that $D_{2}^{*}\in D^{3}_{n_{2}+2}(2,1)$. Since $d^{-}_{D_{2}^{*}}(v)=1$, by Lemma~\ref{n=7}, we obtain $n_{2}\geq6$. We only need to consider the following two subcases:
		
		\vspace{0.1cm}
		\textbf{Subcase 1.1. $n_{2}=6$}

		Combining Lemma~\ref{n=8} and $d^{-}_{D_{2}^{*}}(v)=1$, it is not difficult to check that $D_{2}^{*}\notin \Phi^{3}_{8}(2,1)$. This implies that $\gamma(D_{2}^{*})\geq 9$. Additionally, $\gamma(D\langle V(D_{1})\cup \{v\}\rangle)=n_{1}-1$ and $\gamma(D_{1},D_{2})=1$. One can deduce that $\gamma(D)\geq \gamma(D\langle V(D_{1})\cup \{v\}\rangle)+\gamma(D_{2}^{*})+\gamma(D_{1},D_{2})\geq n+1$, contradicting the fact that $\gamma(D)=n$.

		\vspace{0.1cm}
		\textbf{Subcase 1.2. $n_{2}\geq7$}

		Obviously, we have $|V(D_{2}^{*})|\geq9$. By the inductive hypothesis, $|A(D_{2}^{*})|\leq \varphi^{3}_{n_{2}+2}(2,1)=\binom{n_{2}+1}{2}-2$ holds, i.e., $\gamma(D_{2}^{*})\geq n_{2}+3$. Together with $\gamma(D\langle V(D_{1})\cup \{v\}\rangle)=n_{1}-1$ and $\gamma(D_{1},D_{2})=1$, it is easily seen that $\gamma(D)\geq \gamma(D\langle V(D_{1})\cup \{v\}\rangle)+\gamma(D_{2}^{*})+\gamma(D_{1},D_{2})\geq n+1$, also a contradiction.

		\vspace{0.1cm}
		\noindent\textbf{Case 2. $G\in \mathscr D_{2}$}
		\vspace{0.1cm}

		Indeed, $G-v$ has $h\geq3$ components $D_{1},D_{2},\ldots,D_{h}$ satisfying that $n_{i}\geq4$ for $i\in\{1,h\}$ and $n_{j}=1$ for $2\leq j\leq h-1$. Moreover, $d^{+}_{D\langle V(D_{h})\cup \{v\}\rangle}(v)=n_{h}-2\geq 2$ and $d^{-}_{D\langle V(D_{h})\cup \{v\}\rangle}(v)=1$ hold due to \textbf{Characterization II}. By  Proposition~\ref{two}, one easily checks that $D_{h}$ contains two vertices of out-degree ones that are different from $y$, where $N^{-}_{D\langle V(D_{h})\cup \{v\}\rangle}(v)=\{y\}$. Consequently, it is not difficult to find a vertex $u\in N^{+}_{D\langle V(D_{h})\cup \{v\}\rangle}(v)$ satisfying that $d^{+}_{G}(u)=1$, which follows that $u\rightarrow w$. By \textbf{Characterization III}, we obtain that each vertex of $D_{i}$ dominates $u$, where $i\in\{1,2,\ldots,h-1\}$. It yields that $w$ does not dominate any vertex of $D_{i}$ for $1\leq i\leq h-1$ and $w\nrightarrow v$ since $D$ is $\emph{$C_{\leq 3}$-free}$. This means $N^{+}_{D}(w)\subseteq V(D_{h})$. Denote by $D_{h}^{*}=D\langle V(D_{h})\cup\{v,w\}\rangle$. It is clear that $D_{h}^{*}\in D^{3}_{n_{h}+2}(2,1)$. Let $K=D\langle V(D_{2})\cup V(D_{3})\cup \ldots \cup V(D_{h-1})\rangle$. Again by using \textbf{Characterization III}, we have $\gamma(v,K)+\gamma(D_{1},K)+\gamma(K,D_{h})\geq n-n_{1}-n_{h}-2$. In addition, $\gamma(D\langle V(D_{1})\cup \{v\}\rangle)=n_{1}-1$ and $\gamma(D_{1},D_{h})=1$. Combining Lemma~\ref{n=7} and $d^{-}_{D_{h}^{*}}(v)=1$, one easily sees that $n_{h}\geq6$. Analogously, we also consider the subcases $n_{h}=6$ and $n_{h}\geq7$.
		
		\vspace{0.1cm}
		\textbf{Subcase 2.1. $n_{h}=6$}

		Since $d^{-}_{D_{h}^{*}}(v)=1$, by Lemma~\ref{n=8}, it is easily seen that $D_{h}^{*}\notin \Phi^{3}_{8}(2,1)$. That is, $\gamma(D_{h}^{*})\geq 9$ holds. Thus, we have
		\begin{align*}
			\gamma(D)
			&\geq\gamma(D\langle V(D_{1})\cup \{v\}\rangle)+\gamma(D_{h}^{*})+\gamma(v,K)+\gamma(D_{1},K)+\gamma(K,D_{h})+\gamma(D_{1},D_{h})\\
			&\geq n_{1}-1+9+n-n_{1}-6-2+1\\
			&=n+1,
		\end{align*}
		a contradiction to the fact that $\gamma(D)=n$.

		\vspace{0.1cm}
		\textbf{Subcase 2.2. $n_{h}\geq7$}

		Note once again that $|V(D_{h}^{*})|\geq9$. By the inductive hypothesis, we have $|A(D_{h}^{*})|\leq \varphi^{3}_{n_{h}+2}(2,1)=\binom{n_{h}+1}{2}-2$, namely, $\gamma(D_{h}^{*})\geq n_{h}+3$. Similarly, the following holds:
		\begin{align*}
			\gamma(D)
			&\geq\gamma(D\langle V(D_{1})\cup \{v\}\rangle)+\gamma(D_{h}^{*})+\gamma(v,K)+\gamma(D_{1},K)+\gamma(K,D_{h})+\gamma(D_{1},D_{h})\\
			&\geq n_{1}-1+n_{h}+3+n-n_{1}-n_{h}-2+1\\
			&=n+1,
		\end{align*}
		a contradiction too.

		\vspace{0.1cm}
		\noindent\textbf{Case 3. $G\in \mathscr D_{3}$}
		\vspace{0.1cm}

		In fact, $G-v$ contains $h\geq 3$ components and $n_{i}=1$ for $2\leq i\leq h$. Notice that $y\rightarrow v$ and $p\rightarrow y$, where $V(D_{h-1})=\{p\}$ and $V(D_{h})=\{y\}$. Additionally, $v\nrightarrow p$ since $G$ is a $\emph{$C_{\leq 3}$-free}$ strong subdigraph of $D$. If $p\nrightarrow v$, then $d^{+}_{G}(p)=1$, implying that $p\rightarrow w$ as $d^{+}_{D}(p)\geq2$. On the other hand, by \textbf{Characterization III}, every vertex of $D_{i}$ dominates $p$, where $i\in\{1,2,\ldots,h-2\}$. One easily sees that $w$ does not dominate any vertex of $D_{i}$ for $i\in\{1,2,\ldots,h-2\}$ since $D$ is $\emph{$C_{\leq 3}$-free}$. This follows that $N^{+}_{D}(w)\subseteq\{v\}$, contradicting the fact that $d^{+}_{D}(w)\geq2$. If $p\rightarrow v$, then $d^{+}_{G}(v)=1$ and each vertex of $D_{i}$ dominates $y$ except $x$ by \textbf{Characterization III}, where $1\leq i\leq h-1$ and $N^{+}_{D\langle V(D_{1})\cup \{v\}\rangle}(v)=\{x\}$. Hence, $v\rightarrow w$ holds as $d^{+}_{D}(v)\geq 2$. Moreover, $N^{+}_{D}(w)\subseteq\{x\}$ holds since $D$ is $\emph{$C_{\leq 3}$-free}$. However, this contradicts our assumption that $d^{+}_{D}(w)\geq2$.

		\vspace{0.1cm}
		\noindent\textbf{Case 4. $G\in \mathscr D_{4}$}
		\vspace{0.1cm}

		Clearly, $V(D_{1})=\{x\}$ and $v\rightarrow x$. Analogous to the proof of Case 2, one easily finds a vertex $u\in N^{+}_{D\langle V(D_{h})\cup \{v\}\rangle}(v)$ such that $d^{+}_{G}(u)=1$ and  $u\rightarrow w$. Moreover, each vertex of $D_{i}$ dominates such $u$, where $1\leq i\leq h-1$. It can be deduced that $N^{+}_{D}(w)\subseteq V(D_{h})$ since $D$ is $\emph{$C_{\leq 3}$-free}$. Set $D_{h}^{*}=D\langle V(D_{h})\cup\{v,w\}\rangle$. Similarly, we have $n_{h}\geq 6$ by Lemma~\ref{n=7}. Denote by $K=D\langle V(D_{2})\cup V(D_{3})\cup \ldots \cup V(D_{h-1})\rangle$. Then $\gamma(v,K)+\gamma(D_{1},K)+\gamma(K,D_{h})\geq n-n_{h}-3$ and $\gamma(x,D_{h})=1$. It is sufficient to consider the subcases $n_{h}=6$ and $n_{h}\geq7$:

		\vspace{0.1cm}
		\textbf{Subcase 4.1. $n_{h}=6$}

		Due to $d^{-}_{D_{h}^{*}}(v)=1$, by Lemma~\ref{n=8}, it is not hard to check that $D_{h}^{*}\notin \Phi^{3}_{8}(2,1)$. This implies $\gamma(D_{h}^{*})\geq 9$. As a consequence, we conclude that
		\begin{align*}
			\gamma(D)
			&\geq\gamma(D_{h}^{*})+\gamma(v,K)+\gamma(x,K)+\gamma(K,D_{h})+\gamma(x,D_{h})\\
			&\geq 9+n-9+1\\
			&=n+1.
		\end{align*}
		We contradicts the fact that $\gamma(D)=n$.

		\vspace{0.1cm}
		\textbf{Subcase 4.2. $n_{h}\geq7$}

		Observe first that $|V(D_{h}^{*})|\geq9$. By the inductive hypothesis, we can derive that $\gamma(D_{h}^{*})\geq n_{h}+3$. Therefore, the following holds:
		\begin{align*}
			\gamma(D)
			&\geq\gamma(D_{h}^{*})+\gamma(v,K)+\gamma(x,K)+\gamma(K,D_{h})+\gamma(x,D_{h})\\
			&\geq n_{h}+3+n-n_{h}-3+1\\
			&=n+1.
		\end{align*}
		This also contradicts the fact that $\gamma(D)=n$.

		\vspace{0.1cm}
		\noindent\textbf{Case 5. $G\in \mathscr D_{5}$}
		\vspace{0.1cm}

		Recall that each $D_{i}$ contains a unique vertex, namely, $n_{i}=1$ for $i\in\{1,2,\ldots,h\}$. Note once again that $p\rightarrow y\rightarrow v\rightarrow x$ by \textbf{Characterization II}, where $V(D_{1})=\{x\}$, $V(D_{h-1})=\{p\}$ and $V(D_{h})=\{y\}$. In addition, $v\nrightarrow p$ as $G$ is a $\emph{$C_{\leq 3}$-free}$ strong subdigraph of $D$. Combining $d^{+}_{G}(y)=1$ and $d^{+}_{D}(y)\geq2$, it is clear that $y\rightarrow w$. If $p\nrightarrow v$, then $p\rightarrow w$ since $d^{+}_{D}(p)\geq2$, and \textbf{Characterization III} tells us that any vertex of $D_{i}$ dominates $p$ for $i\in\{1,2,\ldots,h-2\}$. This yields that $N^{+}_{D}(w)\subseteq\{v\}$, a contradiction. If $p\rightarrow v$, then \textbf{Characterization III} shows that $d^{+}_{G}(v)=1$, implying that $v\rightarrow w$. Moreover, every vertex of $D_{i}$ dominates $y$ for $i\in\{2,\ldots,h-2\}$. Therefore, one can derive $N^{+}_{D}(w)\subseteq\{x\}$, which contradicts the fact that $d^{+}_{D}(w)\geq2$.

		\vspace{0.2cm}
		
		\noindent\textbf{Part II. $D-w$ is not strong}
		
		\vspace{0.2cm}

		Indeed, $D-w$ contains $h\geq 2$ components. These components have an acyclic ordering $D_{1},D_{2},\ldots,D_{h}$ such that all arcs between them are from $D_{i}$ to $D_{j}$ for $1\leq i<j\leq h$. Additionally, let $n_{i}$ denote the number of vertices of $D_{i}$, where $i\in\{1,2,\ldots,h\}$. It is straightforward to see that either $n_{i}=1$ or $n_{i}\geq4$ since $D$ is $\emph{$C_{\leq 3}$-free}$. Due to $\delta^{+}(D)\geq 2$, then $n_{h}\geq4$ holds.

		From now on, we shall use the following notation. Let $G_{i}=D\langle V(D_{i})\cup\{w\}\rangle$ for $i\in\{1,2,\ldots,h\}$ and $U=D\langle V(D_{2})\cup V(D_{3})\cup\ldots\cup V(D_{h-1})\rangle$. As $D$ is strong, one easily sees that $N^{+}_{D}(w)\cap V(D_{1})\neq{\O}$ and $N^{-}_{D}(w)\cap V(D_{h})\neq{\O}$. We write $K_{i}$\;(resp., $L_{i}$) for $N^{+}_{D}(w)\cap V(D_{i})$\;(resp., $N^{-}_{D}(w)\cap V(D_{i})$), and we denote $|K_{i}|=k_{i}$\;(resp., $|L_{i}|=\ell_{i}$). where $i\in\{1,2,\ldots,h\}$. Clearly, both $k_{1}$ and $\ell_{h}$ are positive integers since $D$ is strong. One easily checks that $\gamma(D_{1},D_{h})\geq k_{1}\ell_{h}$ as $D$ is $\emph{$C_{\leq 3}$-free}$. Moreover, we denote $X=N^{+}_{D}(w)\cap V(U)$, $Y=N^{-}_{D}(w)\cap V(U)$ and $Z=V(U)\backslash (X\cup Y)$. Clearly, $X,Y,Z$ are mutually disjoint and $V(U)=X\cup Y\cup Z$. Let $x,y,z$ be cardinalities of $X,Y,Z$, respectively. Thus, $x+y+z=n-n_{1}-n_{h}-1$ holds. As $D$ is $\emph{$C_{\leq 3}$-free}$, one checks that $A(X,L_{h})=\O$ and $A(K_{1},Y)=\O$. It is not difficult to see that $\gamma(w,U)=z$, $\gamma(D_{1},U)\geq k_{1}y$ and $\gamma(U,D_{h})\geq x\ell_{h}$. Therefore, one easily deduces that $\gamma(w,U)+\gamma(D_{1},U)+\gamma(U,D_{h})\geq z+k_{1}y+x\ell_{h}\geq n-n_{1}-n_{h}-1$. We define $\gamma^{*}$ to be $\gamma(w,U)+\gamma(D_{1},U)+\gamma(U,D_{h})$, and it is obvious that $\gamma^{*}\geq z+k_{1}y+x\ell_{h}\geq n-n_{1}-n_{h}-1$.

		To make the proof readable, we consider the following four cases:
		
		\vspace{0.1cm}
		
		\noindent\textbf{Case 1. Both $G_{1}$ and $G_{h}$ are strong}
		
		\vspace{0.1cm}
		
		Notice first that $G_{1}\in D_{n_{1}+1}^{3}(1,1)$. It follows that $\gamma(G_{1})\geq n_{1}-1$ by Corollary~\ref{n=3}. In addition, $K_{h}\neq\O$ as $G_{h}$ is a strong subdigraph of $D$. Here to proceed with this case, we consider the following subcases:

		\vspace{0.1cm}
		\textbf{Subcase 1.1. $k_{h}\geq2$}
		\vspace{0.1cm}
		
		Since $\delta^{+}(D)\geq2$ and there is no arc from $D_{h}$ to $D_{i}$ for $i\in\{1,2,\ldots,h-1\}$, one easily derives that $G_{h}\in D_{n_{h}+1}^{3}(2,1)$. By Lemma~\ref{n/2}, $n_{h}\geq 6$ holds since $G_{h}$ is a $\emph{$C_{\leq 3}$-free}$ strong digraph of $D$.
		
		\vspace{0.1cm}
		\textbf{(1). $n_{h}=6$}. By applying Lemma~\ref{n=7}, $G_{h}$ is isomorphic to the circulant digraph $C_{7}(1,2)$. It yields that $\ell_{h}=2$ and $\gamma(G_{h})=7$. Additionally, $\gamma(D_{1},D_{h})\geq k_{1}\ell_{h}=2k_{1}\geq2$. Combining with $\gamma^{*}\geq n-n_{1}-n_{h}-1=n-n_{1}-7$, we have:
		\begin{align*}
			\gamma(D)
			&=\gamma(G_{1})+\gamma(G_{h})+\gamma^{*}+\gamma(D_{1},D_{h})\\
			&\geq n_{1}-1+7+n-n_{1}-7+2\\
			&=n+1.
		\end{align*}
		
		This inequality contradicts our assumption that $\gamma(D)=n$.

		\vspace{0.1cm}
		\textbf{(2). $n_{h}=7$}. We now break the proof up into the following two cases, depending on whether $G_{h}\in \Phi_{8}^{3}(2,1)$ or not.
		
		\vspace{0.1cm}
		\textbf{(2.1)}. If $G_{h}\in \Phi_{8}^{3}(2,1)$, then $G_{h}$ is isomorphic to $F_{8}$, which implies that $\ell_{h}\geq 2$ and $\gamma(G_{h})=8$. Analogously, $\gamma(D_{1},D_{h})\geq2$ holds. Together with $\gamma^{*}\geq n-n_{1}-n_{h}-1=n-n_{1}-8$, the following holds:
		\begin{align*}
			\gamma(D)
			&=\gamma(G_{1})+\gamma(G_{h})+\gamma^{*}+\gamma(D_{1},D_{h})\\
			&\geq n_{1}-1+8+n-n_{1}-8+2\\
			&=n+1.
		\end{align*}
		
		This also contradicts the fact that $\gamma(D)=n$.
		
		\vspace{0.1cm}
		\textbf{(2.2)}. If $G_{h}\notin \Phi_{8}^{3}(2,1)$, then $\gamma(G_{h})\geq9$. Observe once again that $\gamma(D_{1},D_{h})\geq k_{1}\ell_{h}\geq1$ and $\gamma^{*}\geq n-n_{1}-n_{h}-1=n-n_{1}-8$. Consequently, we have
		\begin{align*}
			\gamma(D)
			&=\gamma(G_{1})+\gamma(G_{h})+\gamma^{*}+\gamma(D_{1},D_{h})\\
			&\geq n_{1}-1+9+n-n_{1}-8+1\\
			&=n+1.
		\end{align*}
		
		This also leads to a contradiction.

		\vspace{0.1cm}
		\textbf{(3). $n_{h}\geq8$}. Obviously, $9\leq n_{h}+1<n$ and $G_{h}\in D_{n_{h}+1}^{3}(2,1)$. By the induction hypothesis, we obtain $|A(G_{h})|\leq \varphi_{n_{h}+1}^{3}(2,1)=\binom{n_{h}}{2}-2$. That is, $\gamma(G_{h})\geq n_{h}+2$. Note that $\gamma(D_{1},D_{h})\geq1$ and $\gamma^{*}\geq n-n_{1}-n_{h}-1$. We thus obtain that
		\begin{align*}
			\gamma(D)
			&=\gamma(G_{1})+\gamma(G_{h})+\gamma^{*}+\gamma(D_{1},D_{h})\\
			&\geq n_{1}-1+n_{h}+2+n-n_{1}-n_{h}-1+1\\
			&=n+1.
		\end{align*}
		
		The above inequality also contradicts the assumption that $\gamma(D)=n$.

		\vspace{0.1cm}
		
		\textbf{Subcase 1.2. $k_{h}=1$ and $\ell_{h}\leq 2$}
		
		Recall that $D_{h}\in D^{3}_{n_{h}}(1,1)$ and we thus have $|A(D_{h})|\leq \varphi^{3}_{n_{h}}(1,1)=\binom{n_{h}-1}{2}+1$, yielding that $\gamma(D_{h})\geq n_{h}-2$. Next, we demonstrate that $n_{h}\geq 6$. Suppose to the contrary that $n_{h}\leq 5$, then $n_{h}$ is equal to $4$ or $5$. If $n_{h}=4$, then $D_{h}$ is a cycle of length $4$ such that every vertex of $D_{h}$ has out-degree one in $D_{h}$. This implies that each vertex of $D_{h}$ dominates $w$ since there is no arc from $D_{i}$ to $D_{h}$ for $1\leq i\leq h-1$ and $\delta^{+}(D)\geq2$. Ihis contradicts that $G_{h}$ being a strong subdigraph of $D$. If $n_{h}=5$, then we have $D_{h}\in D^{3}_{5}(1,1)$. Hence, $|A(D_{h})|\leq \varphi^{3}_{5}(1,1)=7$. It is not hard to check that $D_{h}$ contains at least three vertices of out-degree one. This yields $\ell_{h}\geq3$ since $\delta^{+}(D)\geq2$, contradicting our assumption that $\ell_{h}\leq2$. Consequently, we obtain that $n_{h}\geq 6$. Moreover, $\gamma(D_{1},D_{h})\geq \ell_{h}$ and $\gamma(w,D_{h})=n_{h}-\ell_{h}-1$. One can deduce that
		\begin{align*}
			\gamma(D)
			&=\gamma(G_{1})+\gamma(D_{h})+\gamma^{*}+\gamma(D_{1},D_{h})+\gamma(w,D_{h})\\
			&\geq n_{1}-1+n_{h}-2+n-n_{1}-n_{h}-1+\ell_{h}+n_{h}-\ell_{h}-1\\
			&=n+n_{h}-5\\
			&\geq n+1,
		\end{align*}
		contradicts the assumption that $\gamma(D)=n$.

		\vspace{0.1cm}
		
		\textbf{Subcase 1.3. $k_{h}=1$ and $\ell_{h}\geq 3$}
		
		Analogous to the previous discussion, one easily derives that $n_{h}\geq 5$. In addition, it is obvious that $\gamma(D_{1},D_{h})\geq k_{1}\ell_{h}\geq 3$. Recall that $G_{h}\in D^{3}_{n_{h}+1}(1,1)$ and $n_{h}+1\geq6$. Notice first that $G_{h}$ has only one vertex of out-degree one, by Proposition~\ref{two}, we have $G_{h}\notin \Phi^{3}_{n_{h}+1}(1,1)$. Applying Corollary~\ref{n=3}, one easily sees that $|A(G_{h})|\leq \binom{n_{h}}{2}$, i.e., $\gamma(G_{h})\geq n_{h}$. Combining $\gamma(G_{1})\geq n_{1}-1$ and $\gamma^{*}\geq n-n_{1}-n_{h}-1$, the following holds:
		\begin{align*}
			\gamma(D)
			&=\gamma(G_{1})+\gamma(G_{h})+\gamma^{*}+\gamma(D_{1},D_{h})\\
			&\geq n_{1}-1+n_{h}+n-n_{1}-n_{h}-1+3\\
			&=n+1,
		\end{align*}
		a contradiction to the fact that $\gamma(D)=n$ too.

		\vspace{0.1cm}
		
		\noindent\textbf{Case 2. $G_{1}$ is strong and $G_{h}$ is not strong}
		
		\vspace{0.1cm}

		It is straightforward that $k_{h}=0$ and $\ell_{h}\geq1$, which follows that $\gamma(w,D_{h})=n_{h}-\ell_{h}$. Since $d^{+}_{D}(w)\geq 2$, we have $|N^{+}_{D}(w)\cap (V(D_{1})\cup V(U))|\geq2$. We now consider the sum of $\gamma(D_{1},D_{h})$ and $\gamma^{*}$. If
		$k_{1}\geq 2$, then $\gamma(D_{1},D_{h})\geq 2\ell_{h}$. Hence, we obtain $\gamma(D_{1},D_{h})+\gamma^{*}\geq 2\ell_{h}+n-n_{1}-n_{h}-1$. If $k_{1}=1$, then there is a vertex $t$ of $U$ such that $w\rightarrow t$. Consequently, $\gamma(t,D_{h})\geq \ell_{h}$ since $D$ is $\emph{$C_{\leq 3}$-free}$. It is not difficult to check that $\gamma(D_{1},D_{h})+\gamma^{*}\geq \ell_{h}+n-n_{1}-n_{h}-2+\ell_{h}=2\ell_{h}+n-n_{1}-n_{h}-2$. In summary, $\gamma(D_{1},D_{h})+\gamma^{*}\geq 2\ell_{h}+n-n_{1}-n_{h}-2$ holds. Similarly, as shown above, $\gamma(G_{1})\geq n_{1}-1$ and $\gamma(D_{h})\geq n_{h}-2$ hold. Combining the above conclusions, we deduce that
		\begin{align*}
			\gamma(D)
			&=\gamma(G_{1})+\gamma(D_{h})+\gamma^{*}+\gamma(D_{1},D_{h})+\gamma(w,D_{h})\\
			&\geq n_{1}-1+n_{h}-2+2\ell_{h}+n-n_{1}-n_{h}-2+n_{h}-\ell_{h}\\
			&=n+\ell_{h}+n_{h}-5.
		\end{align*}
		
		Recall that $n_{h}\geq 4$. Next, we state that $\ell_{h}+n_{h}\geq7$. Obviously, if $n_{h}\geq 6$, then we are done. Due to $\delta^{+}(D)\geq2$, it is easily seen that $\ell_{h}=4$ if $n_{h}=4$, yielding that $\ell_{h}+n_{h}=8$. Hence, we assume $n_{h}=5$. One easily checks that $D_{h}\in D^{3}_{5}(1,1)$, implying that $|A(D_{h})|\leq 7$. There exist at least three vertices of out-degree one in $D_{h}$ and we have $\ell_{h}\geq3$. Therefore, we obtain $\ell_{h}+n_{h}\geq8$. As a consequence, we obtain that $\ell_{h}+n_{h}\geq7$. This follows $\gamma(D)\geq n+2$, a contradiction.

		\vspace{0.1cm}
		
		\noindent\textbf{Case 3. $G_{1}$ is not strong and $G_{h}$ is strong}
		
		\vspace{0.1cm}

		Recall that $D_{1}$ contains either exactly one vertex or at least $4$ vertices since $D$ is a $\emph{$C_{\leq 3}$-free}$ strong digraph. Here to proceed with this case, we present the proof by distinguishing the following two subcases, namely, $n_{1}=1$ and $n_{1}=4$:

		\vspace{0.1cm}
		
		\textbf{Subcase 3.1. $D_{1}$ has a unique vertex, namely, $n_{1}=1$}

		Clearly, we have $k_{1}=1$. Note that $K_{h}\neq\O$ since $G_{h}$ is a strong subdigraph of $D$. Analogous to the proof of Case 1, we  divide the discussion into the subcases as follows:

		\vspace{0.1cm}
		
		\textbf{(1). $k_{h}\geq2$}. Similar to the previous proof, one easily sees that $G_{h}\in D_{n_{h}+1}^{3}(2,1)$. As $G_{h}$ is a $\emph{$C_{\leq 3}$-free}$ strong subdigraph of $D$, Lemma~\ref{n/2} gives us that $n_{h}\geq 6$.
		
		\vspace{0.1cm}
		\textbf{(1.1). $n_{h}=6$}. Note that $G_{h}$ is isomorphic to the circulant digraph $C_{7}(1,2)$ by Lemma~\ref{n=7}. It is straightforward to check that $\ell_{h}=2$ and $\gamma(G_{h})=7$. Moreover, $\gamma(D_{1},D_{h})\geq \ell_{h}=2$. Together with $\gamma^{*}\geq n-8$, one easily sees that
		\begin{align*}
			\gamma(D)
			&=\gamma(G_{h})+\gamma^{*}+\gamma(D_{1},D_{h})\\
			&\geq 7+n-8+2\\
			&=n+1,
		\end{align*}
		contradicts to the fact that $\gamma(D)=n$.

		\vspace{0.1cm}
		\textbf{(1.2). $n_{h}=7$}. We only need to consider $G_{h}\in \Phi_{8}^{3}(2,1)$ and $G_{h}\notin \Phi_{8}^{3}(2,1)$.
		
		\vspace{0.1cm}
		\textbf{(a)}.  By Lemma~\ref{n=8}, it is obvious that $G_{h}$ is isomorphic to $F_{8}$ if $G_{h}\in \Phi_{8}^{3}(2,1)$. This yields that $\ell_{h}\geq 2$ and $\gamma(G_{h})=8$. Additionally, we have $\gamma(D_{1},D_{h})\geq2$. Combining with $\gamma^{*}\geq n-9$, we can deduce that
		\begin{align*}
			\gamma(D)
			&=\gamma(G_{h})+\gamma^{*}+\gamma(D_{1},D_{h})\\
			&\geq 8+n-9+2\\
			&=n+1,
		\end{align*}
		a contradiction to $\gamma(D)=n$ too.
		
		\vspace{0.1cm}
		\textbf{(b)}. One checks that $\gamma(G_{h})\geq9$ if $G_{h}\notin \Phi_{8}^{3}(2,1)$ by applying Lemma~\ref{n=8}. It is clear that $\gamma(D_{1},D_{h})\geq1$ and $\gamma^{*}\geq n-9$. Consequently, we have
		\begin{align*}
			\gamma(D)
			&=\gamma(G_{h})+\gamma^{*}+\gamma(D_{1},D_{h})\\
			&\geq 9+n-9+1\\
			&=n+1,
		\end{align*}
		a contradiction again.

		\vspace{0.1cm}
		\textbf{(1.3). $n_{h}\geq8$}. Observe first that $9\leq n_{h}+1<n$ and $G_{h}\in D_{n_{h}+1}^{3}(2,1)$. By the induction hypothesis, we obtain that $|A(G_{h})|\leq \varphi_{n_{h}+1}^{3}(2,1)=\binom{n_{h}}{2}-2$. Thus,  $\gamma(G_{h})\geq n_{h}+2$ holds. Recall that $\gamma(D_{1},D_{h})\geq1$ and $\gamma^{*}\geq n-n_{h}-2$. Consequently, we have
		\begin{align*}
			\gamma(D)
			&=\gamma(G_{h})+\gamma^{*}+\gamma(D_{1},D_{h})\\
			&\geq n_{h}+2+n-n_{h}-2+1\\
			&=n+1,
		\end{align*}
		a contradiction.
		
		\vspace{0.1cm}
		
		\textbf{(2). $k_{h}=1$ and $\ell_{h}\leq 2$}
		
		Notice once again that $D_{h}\in D^{3}_{n_{h}}(1,1)$ and $|A(D_{h})|\leq \varphi^{3}_{n_{h}}(1,1)=\binom{n_{h}-1}{2}+1$ holds. Hence, we have $\gamma(D_{h})\geq n_{h}-2$. Analogous to the above discussion in Case 1, $n_{h}\geq 6$ holds. It is not difficult to check that $\gamma(D_{1},D_{h})\geq \ell_{h}$ and $\gamma(w,D_{h})=n_{h}-\ell_{h}-1$. We can derive the following by combining with $\gamma^{*}\geq n-n_{h}-2$:
		\begin{align*}
			\gamma(D)
			&=\gamma(D_{h})+\gamma^{*}+\gamma(D_{1},D_{h})+\gamma(w,D_{h})\\
			&\geq n_{h}-2+n-n_{h}-2+\ell_{h}+n_{h}-\ell_{h}-1\\
			&=n+n_{h}-5\\
			&\geq n+1.
		\end{align*}
		a contradiction.

		\vspace{0.1cm}
		
		\textbf{(3). $k_{h}=1$ and $\ell_{h}\geq 3$}
		
		It is not hard to see that $n_{h}\geq 5$ and $\gamma(D_{1},D_{h})\geq 3$. Moreover, $G_{h}\in D^{3}_{n_{h}+1}(1,1)$ and $n_{h}+1\geq6$. Clearly, $G_{h}$ contains a unique vertex of out-degree one. Using Proposition~\ref{two}, it is straightforward that $G_{h}\notin \Phi^{3}_{n_{h}+1}(1,1)$, implying that $|A(G_{h})|\leq \binom{n_{h}}{2}$ by Corollary~\ref{n=3}. Thus, $\gamma(G_{h})\geq n_{h}$ holds. Due to $\gamma^{*}\geq n-n_{h}-2$, the following holds:
		\begin{align*}
			\gamma(D)
			&=\gamma(G_{h})+\gamma^{*}+\gamma(D_{1},D_{h})\\
			&\geq n_{h}+n-n_{h}-2+3\\
			&=n+1.
		\end{align*}

		This contradicts our assumption that $\gamma(D)=n$.

		\vspace{0.1cm}
		
		\textbf{Subcase 3.2. $D_{1}$ has at least $4$ vertices, i.e., $n_{1}\geq4$}

		Note once again that $D_{1}\in D^{3}_{n_{1}}(1,1)$. Thus, we obtain $|A(D_{1})|\leq \binom{n_{1}-1}{2}+1$, which follows that $\gamma(D_{1})\geq n_{1}-2$. Since $G_{1}$ is not strong, it is clear that $\gamma(w,D_{1})=n_{1}-k_{1}$. In addition, $\gamma(D_{1},D_{h})\geq k_{1}\ell_{h}\geq k_{1}$ and $\gamma^{*}\geq n-n_{1}-n_{h}-1$. Moreover, by the previous discussion, one easily checks that $\gamma(G_{h})\geq n_{h}$. By combining these conclusions, one can deduce that
		\begin{align*}
			\gamma(D)
			&=\gamma(D_{1})+\gamma(G_{h})+\gamma^{*}+\gamma(D_{1},D_{h})+\gamma(w,D_{1})\\
			&\geq n_{1}-2+n_{h}+n-n_{1}-n_{h}-1+k_{1}+n_{1}-k_{1}\\
			&=n+n_{1}-3\\
			&\geq n+1,
		\end{align*}
		contradicts the fact that $\gamma(D)=n$.

		\vspace{0.1cm}
		
		\noindent\textbf{Case 4. Both $G_{1}$ and $G_{h}$ are not strong}
		
		\vspace{0.1cm}

		Note once again that $n_{1}=1$ or $n_{1}\geq4$, and $n_{h}\geq4$. In the remaining proof of this case, we divide the discussion into the following two subcases:

		\vspace{0.1cm}
		
		\textbf{Subcase 4.1. $D_{1}$ contains only one vertex, i.e., $n_{1}=1$}

		One easily sees that $k_{1}=1$ as $D$ is strong. Since $G_{h}$ is not strong, then $k_{h}=0$. Recall that $X=N^{+}_{D}(w)\cap V(U)$ and $|X|=x$. It is obvious that $x\geq 1$ due to $d^{+}_{D}(w)\geq 2$. It is not difficult to check that $\gamma^{*}\geq n-n_{h}+\ell_{h}-3$ and $\gamma(D_{1},D_{h})\geq \ell_{h}$. Together with $\gamma(D_{h})\geq n_{h}-2$, the following holds:
		\begin{align*}
			\gamma(D)
			&=\gamma(D_{h})+\gamma^{*}+\gamma(D_{1},D_{h})+\gamma(w,D_{h})\\
			&\geq n_{h}-2+n-n_{h}+\ell_{h}-3+\ell_{h}+n_{h}-\ell_{h}\\
			&=n+\ell_{h}+n_{h}-5.
		\end{align*}
		
		For the same reason presented in the proof of Case 2, it is not hard to check that $\ell_{h}+n_{h}\geq7$. Consequently, we have $\gamma(D)\geq n+2$, contradicting the fact that $\gamma(D)=n$.

		\vspace{0.1cm}
		
		\textbf{Subcase 4.2. $D_{1}$ contains at least $4$ vertices, i.e., $n_{1}\geq4$}
		
		Obviously, $k_{1}$ and $\ell_{h}$ are positive integers and $\ell_{1}=k_{h}=0$. Notice once again that $n_{i}\geq 4$ and $D_{i}\in D^{3}_{n_{i}}(1,1)$ for $i\in\{1,h\}$. Hence, we have $|A(D_{i})|\leq \binom{n_{i}-1}{2}+1$, yielding that $\gamma(D_{i})\geq n_{i}-2$, where $i=1,h$. Because $G_{1}$\;(resp., $G_{h}$) is not strong, one can check that $\gamma(w,D_{1})=n_{1}-k_{1}$\;(resp., $\gamma(w,D_{h})=n_{h}-\ell_{h}$). Additionally, $\gamma(D_{1},D_{h})\geq k_{1}\ell_{h}$ and $\gamma^{*}\geq n-n_{1}-n_{h}-1$. Combining  these conclusions, one can deduce that
		\begin{align*}
			\gamma(D)
			&=\gamma(D_{1})+\gamma(D_{h})+\gamma^{*}+\gamma(D_{1},D_{h})+\gamma(w,D_{1})+\gamma(w,D_{1})\\
			&\geq n_{1}-2+n_{h}-2+n-n_{1}-n_{h}-1+k_{1}\ell_{h}+n_{1}-k_{1}+n_{h}-\ell_{h}\\
			&=n+n_{1}+n_{h}-6+(k_{1}-1)(\ell_{h}-1)\\
			&\geq n+n_{1}+n_{h}-6\\
			&\geq n+2,
		\end{align*}
		a contradiction to $\gamma(D)=n$.          \hfill $\blacksquare$

		\vspace{0.2cm}
		
		Combining the previous proofs, we thus complete the proof of Theorem~\ref{10}.

		\section{Concluding remarks}
		\noindent
		
		Recall that a digraph is \emph{$C_{\leq k}$-free} if it contains no cycle of length at most $k$. For positive integers $\xi$ and $\zeta$, $D_{n}^{k}(\xi,\zeta)$ is a family of \emph{$C_{\leq k}$-free} strong digraphs on $n$ vertices such that each digraph $D\in D_{n}^{k}(\xi,\zeta)$ satisfies $\delta^{+}(D)\geq\xi$ and $\delta^{-}(D)\geq\zeta$. Let $\varphi_{n}^{k}(\xi,\zeta)=\max\{|A(D)|:D\in D_{n}^{k}(\xi,\zeta)\}$. Denote  $\Phi_{n}^{k}(\xi,\zeta)=\{D\in D_{n}^{k}(\xi,\zeta):|A(D)|=\varphi_{n}^{k}(\xi,\zeta)\}$. Particularly, $\varphi_{n}^{k}(\xi,\zeta)=0$ if $\Phi_{n}^{k}(\xi,\zeta)=\O$ for some collections of $\{n,k,\xi,\zeta\}$.

		In this paper, we prove that  $\varphi^{3}_{n}(2,1)=\binom{n-1}{2}-2$ for $n\ge 10$. Combining Theorem~\ref{upper-lower bounds} and $\varphi_{n}^{3}(2,1)=0$ for $n\leq 6$, we thus completely determine the values of $\varphi^{3}_{n}(2,1)$ for all $n$. Proposition~\ref{equivalence} establishes the equivalence relation between the problem to determine the exact values of $\varphi^{k}_{n}(\xi,\zeta)$ and the famous Caccetta-H\"{a}ggkvist Conjecture. As a consequence, it will be interesting to investigate ordinary $\varphi^{k}_{n}(\xi,\zeta)$ except $\varphi^{k}_{n}(1,1)$ and $\varphi^{3}_{n}(2,1)$, where $k\geq3$ and $\xi,\zeta$ are positive integers.

		Furthermore, we believe Lemma~\ref{one vertex} may be of independent interest since we observe that if one can find a vertex of a digraph $D\in D_{n}^{k}(\xi,\zeta)$ of small degree such that the removal of this vertex does not destroy the strong connectedness, then the upper bounds of general $\varphi^{k}_{n}(\xi,\zeta)$ might be improved by repeatedly applying this lemma.

		\subsection*{Acknowledgments}

		\noindent
		The work was supported by the National Natural Science Foundation of China (No. 12071453), the National Key R and D Program of China(2020YFA0713100),  the Anhui Initiative in Quantum Information Technologies (AHY150200)  and the Innovation Program for Quantum Science and Technology, China (2021ZD0302904).

		\vspace{0.3cm}

		\vskip 3mm
	\end{spacing}
\end{document}